\documentclass[final,3p,times]{elsarticle}

\usepackage[T1]{fontenc}
\usepackage[utf8]{inputenc}
\usepackage{lmodern}
\usepackage{microtype}
\usepackage{amsmath,amssymb,mathtools,amsthm}
\usepackage{enumitem}
\usepackage{xcolor}
\usepackage[colorlinks=true,linkcolor=blue!45!black,citecolor=blue!45!black,urlcolor=blue!45!black]{hyperref}

\biboptions{sort&compress}

\newtheorem{theorem}{Theorem}[section]
\newtheorem{proposition}[theorem]{Proposition}
\newtheorem{lemma}[theorem]{Lemma}
\newtheorem{corollary}[theorem]{Corollary}
\theoremstyle{definition}

\newtheorem{remark}[theorem]{Remark}

\newcommand{\R}{\mathbb R}
\newcommand{\id}{\operatorname{id}}
\newcommand{\Int}{\operatorname{int}}
\newcommand{\relint}{\operatorname{relint}}
\newcommand{\cone}{\operatorname{cone}}
\newcommand{\conv}{\operatorname{conv}}
\newcommand{\supp}{\operatorname{supp}}
\newcommand{\Min}{\operatorname{Min}}

\newcommand{\Dia}{\Diamond}
\newcommand{\Vone}{\mathcal V_1}
\newcommand{\Vsub}{\mathcal V_{\leq 1}}
\newcommand{\Vext}{\mathcal V}

\newcommand{\dd}{\,d}
\newcommand{\one}{\mathbf 1}

\journal{Preprint}

\begin{document}

\begin{frontmatter}

\title{Characterizing finite posets whose probabilistic powerdomains are RB-domains}

\author[addr1]{Yuxu Chen}
\address[addr1]{School of Mathematics, Sichuan University}
\ead{chenyuxu@scu.edu.cn}

\author[addr1]{Hui Kou}
\ead{kouhuixu@scu.edu.cn}

\author[addr1]{Zhenchao Lyu}
\ead{zhenchaolyu@scu.edu.cn}

\begin{abstract}
We classify the finite posets whose probabilistic powerdomain is an RB-domain.  For a finite nonempty poset \(P\), let \(\Vone(P)\) be the  probability powerdomain of $P$, which is the probability simplex ordered by the stochastic order.  We prove that \(\Vone(P)\) is an RB-domain if and only if \(P\) has a least element and the undirected Hasse graph of \(P\) is a tree.  Consequently, the  probabilistic powerdomain does not preserve RB-domains; the four-point diamond gives a finite counterexample.  The proof separates two obstructions.  First, if \(P\) has no least element, then the face of probability measures supported on the minimal points must be fixed pointwise by every deflation below the identity.  
Secondly, once a least element exists, the Hasse graph is connected, and a cycle in it makes the local stochastic cone non-simplicial.  A Euclidean finite-step cone argument then rules out the finite-valued monotone approximations supplied by the RB property.
\end{abstract}

\begin{keyword}
probabilistic powerdomain \sep finite poset \sep Hasse graph \sep stochastic order \sep RB-domain \sep deflation
\MSC[2020] 06B35 \sep 06B30 \sep 60B05 \sep 68Q55
\end{keyword}

\end{frontmatter}

\section{Introduction}

The probabilistic powerdomain is a basic construction in domain-theoretic semantics for probabilistic computation, going back to Jones and Plotkin \cite{JonesPlotkin1989}.  A natural structural question is whether this construction preserves the standard approximation classes of domains.  Jung and Tix showed that a previously claimed proof of preservation of RB-domains was not valid and isolated finite posets as basic test cases \cite{JungTix1998}.  They obtained a positive result for finite tree domains, while the behavior for arbitrary finite posets remained open.  Goubault-Larrecq later emphasized the same difficulty for probabilistic and subprobabilistic powerdomains and singled out the four-point diamond as an important finite example \cite{GoubaultLarrecq2012}.

The purpose of this paper is to classify all finite posets whose  probabilistic powerdomain is an RB-domain.  
The main ingredient is a finite-dimensional cone obstruction: for a non-simplicial proper cone, finite-valued monotone maps cannot locally approximate the identity.  The proof uses elementary convex separation, Lipschitz epigraphs, Rademacher's theorem, Fubini's theorem, and integration by parts.

In this paper, we give the exact answer for  probabilisic powerdomain on finite posets.  If \(P\) is finite and nonempty, then a probability valuation is simply a vector
\[
        p=(p_x)_{x\in P}\in\R^P_{\ge0},
        \qquad
        \sum_{x\in P}p_x=1.
\]
The order is the stochastic order:
\[
        p\le q
        \quad\Longleftrightarrow\quad
        p(U)\le q(U)
        \quad\text{for every upper set }U\subseteq P,
\]
where \(p(U)=\sum_{x\in U}p_x\).  This ordered dcpo is the probabilistic powerdomain of $P$, denoted by \(\Vone(P)\).

The main theorem is the following. For a finite poset:
\[
\boxed{
        \Vone(P)\text{ is an RB-domain}
        \quad\Longleftrightarrow\quad
        P\text{ is a pointed tree }}
\]
Thus the  probabilistic powerdomain is RB precisely on finite tree domains.  Here a finite tree domain is meant in the standard sense used in the finite-tree theorem of Jung--Tix and in Goubault-Larrecq's Lemma~6.8: a finite pointed poset whose principal ideals, or downward closures, are chains \cite[Theorem~13]{JungTix1998}; see also \cite[Lemma~6.8]{GoubaultLarrecq2012}.  For finite posets this is equivalent to having a least element and a tree as undirected Hasse graph, as recalled below.

The proof is organized as two obstructions followed by the known positive tree case.  The first obstruction is purely order-theoretic.  If \(P\) has no least element, then the minimal points form a set \(M\) with at least two elements.  The face \(\Delta_M\) of probabilities supported on \(M\) consists entirely of minimal elements of \(\Vone(P)\).  Therefore every deflation \(r\leq\id\) fixes \(\Delta_M\) pointwise, contradicting finite range.

After this, we may assume that \(P\) has a least element.  Then the Hasse graph is connected, and the stochastic order on \(\Delta_P\) has a single local order cone
\[
        K_P=\cone\{\delta_y-\delta_x:x\prec y\},
\]
the cone of nonnegative upward flows on the Hasse diagram.  If the Hasse graph has a cycle, then this Hasse-flow cone is non-simplicial.  The RB approximation property would produce finite-valued \(K_P\)-monotone maps on compact blocks of \(\Delta_P\) converging uniformly to the identity.  A Euclidean finite-step obstruction shows that this is impossible for non-simplicial closed pointed cones.

Since this paper deals with general finite posets, the notation is somewhat heavier than in the four-point example.  For orientation, one may also compare the special diamond-lattice calculation, where the same obstruction can be written out explicitly.

Section \ref{sec:valuations} recalls the stochastic order and the RB finite-deflation property.  Section \ref{sec:least} proves the least-element obstruction.  Section \ref{sec:hasse} identifies the local order cone with the Hasse-flow cone and derives the local approximation supplied by RB.  Section \ref{sec:euclidean} proves the Euclidean finite-step obstruction.  Section \ref{sec:classification} combines these ingredients with the Jung--Tix finite-tree theorem.

\section{Finite probability valuations and RB approximants}\label{sec:valuations}

\subsection{Domain-theoretic notation}

We use standard terminology from domain theory; see \cite{AbramskyJung1994,Gierz2003}.  A subset \(A\) of a poset is \emph{directed} if it is nonempty and every two elements of \(A\) have an upper bound in \(A\).  A \emph{dcpo} is a poset in which every directed subset has a supremum.  A map between dcpos is \emph{Scott-continuous} if it is monotone and preserves directed suprema.  For elements \(x,y\) in a dcpo, one writes \(x\ll y\) if, whenever \(A\) is directed and \(y\le \sup A\), there is \(a\in A\) such that \(x\le a\).

A \emph{deflation} on a dcpo \(D\) is a Scott-continuous map \(r:D\to D\) with finite image and \(r\leq\id_D\).  An RB-domain may be characterized internally as a pointed dcpo carrying a directed family \((r_i)_{i\in I}\) of deflations with pointwise supremum \(\id_D\); see Jung \cite[Theorem~4.1]{Jung1989} or Jung--Tix \cite{JungTix1998}.  We shall use only this finite-deflation approximation property.

\subsection{Finite-dimensional and convex-analytic notation}\label{subsec:finite-dimensional-notation}

All finite-dimensional vector spaces in the paper are real.  We use standard
facts from finite-dimensional convex analysis, including separation of closed
convex sets, relative interiors, dual cones, and the bipolar theorem; see, for
example, Rockafellar \cite{Rockafellar1970}.

In the Euclidean finite-step part, Euclidean vectors are typeset in bold, for
example \(\mathbf x,\mathbf y,\mathbf v,\mathbf h\).  In Section~\ref{sec:euclidean}, the distinguished cone direction and dual vector are written \(\mathbf e\) and \(\boldsymbol\eta\), and transverse variables are written \(\mathbf z,\mathbf w\).  Poset elements and probability valuations retain the conventional notation used in finite-poset theory and are not
bolded.  Thus, in expressions such as \(\delta_y-\delta_x\), the symbols
\(x,y\in P\) are elements of the finite poset, while \(\delta_x,\delta_y\in
\R^P\) are basis vectors.  We write \(\mathbf x\cdot\mathbf y\) for the
Euclidean inner product and \(\|\mathbf x\|\) for the associated norm.  We use
\(\operatorname{End}(E)\) for the vector space of linear maps \(E\to E\); after
choosing Euclidean coordinates, these are matrices.  The identity matrix on
\(\R^d\) is denoted by \(I_d\).  Vectors in \(\R^d\) are regarded as column
vectors.  For \(\mathbf v,\mathbf y\in\R^d\), the product
\(\mathbf v\mathbf y^T\) denotes the rank-one outer-product matrix
\[
        (\mathbf v\mathbf y^T)\mathbf h
        =\mathbf v\,(\mathbf y\cdot \mathbf h),
        \qquad \mathbf h\in\R^d .
\]

For a subset \(A\) of a Euclidean space, \(\Int A\) denotes ordinary interior
in the ambient space, and \(\relint A\) denotes relative interior in the
affine hull of \(A\).  We write \(L\Subset B\) to mean that \(L\) is
compactly contained in \(B\); in the situations below this means that the closure
of \(L\), taken in the relevant Euclidean or affine ambient space, is compact
and contained in \(B\).  If \(S\) is compact and convex, an
\emph{extreme point} of \(S\) is a point that is not a nontrivial convex
combination of two other points of \(S\).

For a subset \(S\) of a vector space, \(\conv(S)\) denotes the convex hull and
\(\cone(S)\) denotes the set of all finite nonnegative linear combinations of
elements of \(S\).  A \emph{cone}
\(K\) is a set closed under addition and multiplication by nonnegative
scalars.  It is \emph{pointed} if \(K\cap(-K)=\{0\}\), and
\emph{full-dimensional} if its ordinary interior in the ambient space is
nonempty.  A cone is \emph{proper} if it is closed, convex, pointed, and
full-dimensional.  A cone is \emph{polyhedral} if it is generated by finitely
many vectors, equivalently, in finite dimension, if it is the intersection of
finitely many closed half-spaces.  A ray \(\R_{\ge0}\mathbf v\), with
\(\mathbf v\ne0\), is an \emph{extreme ray} of a cone \(C\) if
\(\mathbf v=\mathbf a+\mathbf b\) with \(\mathbf a,\mathbf b\in C\) implies
\(\mathbf a,\mathbf b\in\R_{\ge0}\mathbf v\).  If a pointed cone spans a
\(d\)-dimensional vector space, it is \emph{simplicial} if it is generated by
\(d\) linearly independent rays; for pointed polyhedral cones this is
equivalent to having exactly \(d\) extreme rays.

For a cone \(K\) in a Euclidean space \(E\), the Euclidean dual cone is
\[
        K^*=\{\mathbf y\in E:\mathbf y\cdot \mathbf x\ge0
        \text{ for every }\mathbf x\in K\}.
\]
The cone order associated with \(K\) is
\[
        \mathbf u\le_K \mathbf v\quad\Longleftrightarrow\quad
        \mathbf v-\mathbf u\in K.
\]
If \(K\) is pointed, this is a partial order.  A map \(Q:X\to E\), where
\(X\subseteq E\), is \(K\)-monotone if \(\mathbf x\le_K \mathbf y\) in \(X\)
implies \(Q(\mathbf x)\le_K Q(\mathbf y)\).

If \(O\subseteq\R^d\) is open, \(C_c^\infty(O)\) denotes the space of smooth
real-valued functions on \(\R^d\) whose supports are compact subsets of
\(O\).  The support of a function \(g\), denoted \(\supp g\), is the closure of
\(\{\mathbf x:g(\mathbf x)\ne0\}\).  Lebesgue measure on \(\R^d\) is denoted in
integrals by \(\dd x\).  Thus \(\int f(\mathbf x)\dd x\) means integration with respect
to \(d\)-dimensional Lebesgue measure; the symbol \(\dd x\) is a measure
notation, not a derivative.  If \(W\) is a Euclidean subspace, Lebesgue measure
on \(W\) is denoted by the corresponding variable, for example \(\dd z\).  In
Section~\ref{sec:euclidean} we sometimes normalize the measure \(\dd z\) on a
chosen hyperplane \(W\) so that a specified linear coordinate change has
Jacobian one.

The gradient notation is used only in the Euclidean integration argument of
Section~\ref{sec:euclidean}.  For \(\psi\in C_c^\infty(O)\), \(\nabla\psi(\mathbf x)\)
denotes the Euclidean gradient, characterized by
\[
        \left.\frac{d}{ds}\right|_{s=0}\psi(\mathbf x+s\mathbf h)
        =\nabla\psi(\mathbf x)\cdot \mathbf h,
        \qquad \mathbf h\in\R^d .
\]
If \(W\subseteq\R^d\) is a Euclidean subspace, \(V\subseteq W\) is open, and
\(g:V\to\R\) is differentiable at \(\mathbf z\in V\), then \(\nabla_W g(\mathbf z)\in W\) is the
unique vector satisfying
\[
        Dg_{\mathbf z}(\mathbf w)=\nabla_W g(\mathbf z)\cdot \mathbf w,
        \qquad \mathbf w\in W.
\]
Here the dot product is the Euclidean inner product restricted to \(W\).  Thus
\(\nabla_W g(\mathbf z)\) records the direction of steepest increase among directions
lying in the subspace \(W\); it has no component normal to \(W\).

\subsection{Finite probability valuations}

Let \(P\) be a finite nonempty poset.  An upper set is a subset \(U\subseteq P\) such that \(x\in U\) and \(x\le y\) imply \(y\in U\).  We write
\[
        \Delta_P=
        \left\{p\in\R^P_{\ge0}:\sum_{x\in P}p_x=1\right\}
\]
for the probability simplex on \(P\).  The stochastic order on \(\Delta_P\) is
\[
        p\le q
        \quad\Longleftrightarrow\quad
        p(U)\le q(U)
        \quad\text{for every upper set }U\subseteq P.
\]
We denote the ordered set \((\Delta_P,\leq)\) by \(\Vone(P)\).  The next lemma is the standard finite-space fact that probability valuations form a dcpo and that directed suprema are computed on upper-set evaluations; we include the proof because the exact formula is used later.

\begin{lemma}[Directed suprema on a finite poset]\label{lem:finite-directed-sup}
Let \(A\subseteq\Delta_P\) be directed.  Then \(A\) has a supremum in \(\Delta_P\), and for every upper set \(U\subseteq P\),
\[
        (\sup A)(U)=\sup_{p\in A}p(U).
\]
\end{lemma}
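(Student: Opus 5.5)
Define a candidate for the supremum pointwise on upper sets, show it comes from a probability vector, and then check that it is the least upper bound. For each upper set \(U\subseteq P\), put \(\phi(U)=\sup_{p\in A}p(U)\in[0,1]\). Since \(A\) is directed, \(\phi\) is a directed limit. More precisely, \(A\) is a net indexed by itself under the stochastic order, and along this net each map \(p\mapsto p(U)\) is monotone and bounded. Hence \(p(U)\to\phi(U)\) along the net for every upper set \(U\).

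Next, extract a limit vector. The simplex \(\Delta_P\) is compact, so the net \((p)_{p\in A}\) has a cluster point \(q\in\Delta_P\). The evaluations \(p\mapsto p(U)\) are continuous linear functionals and converge along the whole net, so \(q(U)=\phi(U)\) for every upper set \(U\). Alternatively, one can avoid compactness: every \(p_x\) equals an inclusion–exclusion combination of values on upper sets, for instance \(p_x=p(\uparrow x)-p(\uparrow x\setminus\{x\})\), and \(\uparrow x\setminus\{x\}\) is again an upper set. Each coordinate \(p_x\) therefore converges along the net, and the limit vector \(q\) lies in the closed set \(\Delta_P\). Either way we obtain \(q\in\Delta_P\) with \(q(U)=\phi(U)\).

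Finally, check that \(q=\sup A\). For each \(p\in A\) and each upper set \(U\) we have \(p(U)\le\phi(U)=q(U)\), so \(q\) is an upper bound of \(A\). If \(r\in\Delta_P\) is any upper bound, then \(r(U)\ge p(U)\) for all \(p\in A\), hence \(r(U)\ge\phi(U)=q(U)\), that is, \(q\le r\). So \(q\) is the least upper bound. It is unique because the stochastic order is antisymmetric: two valuations agreeing on all upper sets agree on all singletons by the difference formula above. This also yields the displayed formula \((\sup A)(U)=\sup_{p\in A}p(U)\).

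The only step needing care is the second one, namely that the pointwise suprema \(\phi(U)\) are realized by an actual vector in \(\Delta_P\). The net-convergence argument handles this: coordinates are differences of upper-set values, and nonnegativity and total mass \(1\) pass to limits.
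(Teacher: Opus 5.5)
Your proof is correct. It runs in the opposite order from the paper's argument. The paper first uses compactness of $\Delta_P$ to extract a convergent subnet of $A$ with limit $p$. It then shows that $p$ is an upper bound, via cofinality of the tails $\{a\in A: p_0\le a\}$, and that $p$ lies below every upper bound. Only at the end does it derive $(\sup A)(U)=\sup_{a\in A}a(U)$.

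You start from the formula instead. Each evaluation $p\mapsto p(U)$ is a monotone bounded net on the directed index set $A$, so it converges along all of $A$ to $\phi(U)$. Once you have $q\in\Delta_P$ with $q(U)=\phi(U)$ for every upper set $U$, the upper-bound and least-upper-bound properties each take one line.

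Your compactness variant (a cluster point plus continuity of the evaluations) is essentially the paper's argument reorganized. Your second variant is genuinely different and more elementary. Since $\uparrow x\setminus\{x\}$ is again an upper set, the identity $p_x=p(\uparrow x)-p(\uparrow x\setminus\{x\})$ shows that every coordinate converges. So the whole net $A$ converges in $\R^P$, with no compactness and no subnet extraction.

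This gives a slightly stronger conclusion: a directed subset of $\Vone(P)$ converges in the Euclidean topology to its supremum. The paper only extracts a convergent subnet, which is all it needs.
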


\begin{proof}
Since \(P\) is finite, \(\Delta_P\) is compact in the Euclidean topology of \(\R^P\).  Regard \(A\) as a net indexed by itself.  Choose a convergent subnet \((p_j)\) with limit \(p\in\Delta_P\).

We first show that \(p\) is an upper bound of \(A\).  Fix \(p_0\in A\).  The tail \(A_{p_0}=\{a\in A:p_0\le a\}\) is cofinal in \(A\), so the subnet is eventually in this tail.  Hence, for every upper set \(U\), eventually
\[
        p_0(U)\le p_j(U).
\]
The evaluation map \(\eta\mapsto\eta(U)\) is continuous, so passing to the limit gives \(p_0(U)\le p(U)\) for every upper \(U\).  Thus \(p_0\le p\), and \(p\) is an upper bound.

If \(q\) is any upper bound of \(A\), then \(p_j\le q\) for every \(j\).  Passing to the limit in all upper-set evaluations gives \(p\le q\).  Hence \(p=\sup A\).

Finally, for an upper set \(U\), the inequality \(\sup_{a\in A}a(U)\le p(U)\) follows from \(a\le p\).  The reverse inequality follows from \(p(U)=\lim_j p_j(U)\) and \(p_j(U)\le \sup_{a\in A}a(U)\).  This proves the formula.  Since the stochastic order is defined by upper-set evaluations, the supremum is unique.
\end{proof}

\subsection{Finite-poset notation}

If \(P\) is finite and \(x<y\), we say that \(y\) \emph{covers} \(x\), and write \(x\prec y\), if there is no \(z\in P\) with \(x<z<y\).  The Hasse diagram has vertex set \(P\) and directed edges \(x\to y\) for cover relations \(x\prec y\).  The \emph{undirected Hasse graph} is obtained by forgetting these orientations.  We use standard graph terminology: a graph is a \emph{tree} if it is connected and has no cycle \cite{Diestel2017}.  A saturated chain is a chain \(x_0<\cdots<x_n\) such that \(x_{i-1}\prec x_i\) for all \(i\).

In the finite-tree theorem of Jung--Tix and in Goubault-Larrecq's Lemma~6.8,
a finite tree is a finite pointed poset whose principal ideals \(\downarrow x\)
are chains \cite[Theorem~13]{JungTix1998}; see also \cite[Lemma~6.8]{GoubaultLarrecq2012}.
Lemma~\ref{lem:finite-tree-equivalence} spells out the equivalence with the
Hasse-graph condition used in this paper. Since this is a standard elementary equivalence for finite posets, we omit the proof.

\begin{lemma}[Finite tree domains and Hasse trees]\label{lem:finite-tree-equivalence}
For a finite nonempty poset \(P\), the following are equivalent:
\begin{enumerate}[label=\textup{(\roman*)}]
\item \(P\) has a least element and its undirected Hasse graph is a tree;
\item \(P\) has a least element and every principal ideal \(\downarrow x\) is a chain.
\end{enumerate}
\end{lemma}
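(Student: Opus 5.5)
Throughout, let \(\bot\) denote the least element, which both (i) and (ii) assume. So it suffices to show, for a finite poset with least element \(\bot\), that the undirected Hasse graph is a tree if and only if every principal ideal \(\downarrow x\) is a chain. The plan rests on two facts. First, every \(x\in P\) is joined to \(\bot\) by a saturated chain \(\bot=x_0\prec x_1\prec\cdots\prec x_n=x\); this follows by induction on \(|\downarrow x|\) using finiteness, and it shows the Hasse graph is connected. Second, if \(x\le y\) then every saturated chain from \(x\) to \(y\) lies in the interval \([x,y]\).

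(ii)\(\Rightarrow\)(i). Connectedness is already known, so we only need to rule out a cycle. Suppose the undirected Hasse graph contains a cycle \(C\), and choose a vertex \(t\) of \(C\) that is maximal in \(P\) among the vertices of \(C\). Its two neighbours \(a,b\) on \(C\) are distinct. Each is comparable to \(t\) through a cover relation, and by maximality of \(t\) we get \(a\prec t\) and \(b\prec t\). Then \(a,b\in\downarrow t\), so by (ii) they are comparable, say \(a<b\). This gives \(a<b<t\), contradicting \(a\prec t\). Hence the graph is acyclic, and therefore a tree.

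(i)\(\Rightarrow\)(ii). Suppose \(\downarrow x\) is not a chain, so it contains incomparable elements \(a,b\). Choose saturated chains from \(\bot\) to \(a\) followed by \(a\) to \(x\), and from \(\bot\) to \(b\) followed by \(b\) to \(x\). These give two upward walks in the Hasse diagram from \(\bot\) to \(x\), each a path because it is strictly increasing. They are distinct, since the first passes through \(a\) and the second cannot: it passes through \(b\), and any element of a strictly increasing chain containing \(b\) is comparable to \(b\). Two distinct paths with the same endpoints in a graph produce a cycle, namely by taking the first vertex where the paths diverge and the next vertex where they rejoin. This contradicts the tree hypothesis.

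The only mildly delicate step is this last extraction of a cycle from two distinct paths, which is a standard graph fact (a tree has unique paths between vertices). I expect no real obstacle; the main care is making sure the two paths are genuinely distinct, which the incomparability argument above settles.
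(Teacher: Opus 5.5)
Your proof is correct and complete. The paper gives no argument for this lemma: it calls the equivalence standard and omits the proof. So there is no route in the paper to compare against, and your write-up supplies the missing details.

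Both directions check out.
\begin{itemize}
\item In (ii)\(\Rightarrow\)(i), a vertex \(t\) that is maximal among the vertices of a Hasse cycle has two distinct neighbours on the cycle. They are distinct because cycles in the simple Hasse graph have length at least \(3\). By maximality both neighbours are lower covers of \(t\), so they lie in the chain \(\downarrow t\), and two comparable elements cannot both be covered by \(t\).
\item In (i)\(\Rightarrow\)(ii), incomparable \(a,b\le x\) give two strictly increasing Hasse paths from \(\bot\) to \(x\). These paths are different because every vertex of the second is comparable to \(b\), so \(a\) cannot lie on it. Two distinct paths with the same endpoints contradict uniqueness of paths in a tree.
\end{itemize}

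Two small remarks. Your ``second fact'', that saturated chains from \(x\) to \(y\) lie in \([x,y]\), is never used. In (ii)\(\Rightarrow\)(i), the least element enters only through connectedness, which is the same saturated-chain argument the paper uses for Corollary~\ref{cor:least-connected}.
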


For a vector \(h\in\R^P\) and a subset \(A\subseteq P\), we write
\[
        h(A)=\sum_{x\in A}h_x.
\]
The vector \(\one_A\) is the indicator of \(A\), and \(\delta_x=\one_{\{x\}}\) is the unit vector at \(x\).  We write
\[
        \uparrow x=\{y\in P:x\le y\}
\]
for the principal upper set generated by \(x\).

\section{The least-element obstruction}\label{sec:least}

The first obstruction is independent of the Hasse graph.  It shows that the  probabilistic powerdomain can be RB only when the underlying finite poset has a least element.

\begin{proposition}[No finite deflation without a least element]\label{prop:least-element-obstruction}
Let \(P\) be a finite nonempty poset.  If \(P\) has no least element, then \(\Vone(P)\) admits no finite-range deflation \(r\leq\id\).  Consequently, if \(\Vone(P)\) is an RB-domain, then \(P\) has a least element.
\end{proposition}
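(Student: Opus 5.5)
The plan follows the outline in the introduction: identify a face of \(\Delta_P\) consisting of minimal elements of \(\Vone(P)\), and use \(r\le\id\) to force every deflation to fix that face pointwise.

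Let \(M\) be the set of minimal elements of \(P\). Since \(P\) is finite and nonempty, \(M\) is nonempty, and every element of \(P\) lies above some element of \(M\). If \(M=\{m\}\) were a singleton, then \(m\) would be the least element, so the hypothesis gives \(|M|\ge2\). Let
\[
\Delta_M=\{p\in\Delta_P:\ p_x=0\text{ for }x\notin M\}.
\]
Because \(|M|\ge2\), the face \(\Delta_M\) is a simplex of dimension at least one and is therefore an infinite set.

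The key step is to show that every \(p\in\Delta_M\) is minimal in \(\Vone(P)\). Suppose \(q\le p\) with \(q\in\Delta_P\). For each \(m\in M\), the set \(U_m=P\setminus\{m\}\) is an upper set, because \(m\) is minimal: if \(x\ne m\) and \(x\le y\), then \(y\ne m\). The order \(q\le p\) gives \(q(U_m)\le p(U_m)\), that is, \(1-q_m\le 1-p_m\), so \(q_m\ge p_m\). Summing over \(m\in M\) gives
\[
\sum_{m\in M}q_m\ge\sum_{m\in M}p_m=1,
\]
so \(q\) is supported on \(M\). Then \(q_m\ge p_m\) for all \(m\in M\), together with equal total mass, forces \(q=p\). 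Hence each point of \(\Delta_M\) is a minimal element.

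Now let \(r\) be any map with \(r\le\id\); no Scott-continuity is needed. For \(p\in\Delta_M\) we have \(r(p)\le p\), and minimality of \(p\) gives \(r(p)=p\). Thus the image of \(r\) contains the infinite set \(\Delta_M\), so \(r\) cannot have finite range. In particular \(\Vone(P)\) admits no deflation at all, so it cannot carry a directed family of deflations with supremum \(\id\). By the characterization of RB-domains recalled in Section~\ref{sec:valuations}, \(\Vone(P)\) is then not an RB-domain.

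There is no real obstacle here. The only point needing care is the choice of the upper sets \(U_m=P\setminus\{m\}\), which turns the stochastic-order comparison into coordinatewise inequalities on \(M\).
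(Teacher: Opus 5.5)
Your proof is correct and follows the paper's argument: every point of the face \(\Delta_M\) is minimal in \(\Vone(P)\), so any \(r\le\id\) fixes \(\Delta_M\) pointwise and cannot have finite image. The only difference is in the minimality step. You test against the upper sets \(P\setminus\{m\}\) and get \(q_m\ge p_m\) in one stroke, whereas the paper first uses \(\uparrow x\) for \(x\notin M\) to force support in \(M\) and then \(\uparrow m\) to get \(\nu_m\le\mu_m\); both versions close with the same total-mass count.
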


\begin{proof}
Suppose that \(P\) has no least element.  Since \(P\) is finite, the set
\[
        M=\Min(P)
\]
of minimal elements is nonempty.  It has at least two elements: if \(M=\{m\}\), then every element of \(P\) lies above some minimal element, hence above \(m\), so \(m\) would be least.  Let
\[
        \Delta_M=
        \{\mu\in\Delta_P:\supp(\mu)\subseteq M\}
\]
be the face of probabilities supported on \(M\).  This is a positive-dimensional simplex, hence infinite.

We claim that every \(\mu\in\Delta_M\) is minimal in \(\Vone(P)\).  Let \(\nu\leq\mu\).  If \(x\notin M\), then \(\uparrow x\) contains no minimal element.  Hence
\[
        \mu(\uparrow x)=0.
\]
Since \(\nu\leq\mu\), we get \(\nu(\uparrow x)=0\), and therefore \(\nu_x=0\).  Thus \(\nu\) is supported on \(M\).

For \(m\in M\), the upper set \(\uparrow m\) meets \(M\) only at \(m\).  Hence
\[
        \nu_m=\nu(\uparrow m)
        \leq
        \mu(\uparrow m)=\mu_m.
\]
Summing over \(m\in M\) gives
\[
        1=\sum_{m\in M}\nu_m
        \leq
        \sum_{m\in M}\mu_m=1.
\]
Thus equality holds termwise, and \(\nu_m=\mu_m\) for all \(m\in M\).  Therefore \(\nu=\mu\), proving minimality.

If \(r\leq\id\) is a deflation and \(\mu\in\Delta_M\), then \(r(\mu)\leq\mu\).  By minimality, \(r(\mu)=\mu\).  Thus \(r\) fixes \(\Delta_M\) pointwise, so its image contains the infinite set \(\Delta_M\).  This contradicts the finite-image condition.  Hence no such finite-range deflation exists when \(P\) has no least element.
\end{proof}

\begin{corollary}\label{cor:least-connected}
If \(P\) has a least element, then the undirected Hasse graph of \(P\) is connected.
\end{corollary}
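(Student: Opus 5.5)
The plan is to connect every element of \(P\) to the least element by a path in the undirected Hasse graph. Let \(\bot\) denote the least element of \(P\). Since the undirected Hasse graph has vertex set \(P\), it suffices to show that each \(y\in P\) is joined to \(\bot\) by a path. Connectedness then follows, because any two vertices \(y,y'\) are linked through \(\bot\).

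Fix \(y\in P\). If \(y=\bot\) there is nothing to show, so assume \(\bot<y\). The key step is to produce a saturated chain \(\bot=x_0\prec x_1\prec\cdots\prec x_n=y\). To obtain it, consider the nonempty finite set of chains from \(\bot\) to \(y\) (for instance \(\{\bot,y\}\)), and choose one of maximal length. Maximality forces consecutive elements to be cover relations: if \(x_{i-1}<z<x_i\) for some \(z\), inserting \(z\) would give a longer chain. Alternatively, one can argue by induction on \(|{\downarrow y}|\): pick a maximal element \(z\) of \({\downarrow y}\setminus\{y\}\), so that \(z\prec y\), and apply the inductive hypothesis to \(z\).

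Each cover relation \(x_{i-1}\prec x_i\) is an edge of the Hasse diagram, so the saturated chain gives an undirected path from \(\bot\) to \(y\). Finiteness of \(P\) is the only point needing care, since it guarantees that maximal chains exist and that every strict relation refines to covers. With that in hand, the remaining argument is immediate and I expect no real obstacle.
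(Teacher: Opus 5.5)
Your proposal is correct and follows the paper's proof: both join each element to the least element \(\bot\) by a saturated chain and read that chain as an undirected path in the Hasse graph. The paper takes the existence of the saturated chain for granted. You justify it (via a maximal-length chain, or by induction on \(|{\downarrow y}|\)), and both of your justifications are valid.
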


\begin{proof}
Let \(\bot\) be the least element.  For every \(x\in P\), there is a saturated chain from \(\bot\) to \(x\).  Hence every vertex is connected to \(\bot\) in the undirected Hasse graph.
\end{proof}

\section{Hasse-flow cones and local RB approximation}\label{sec:hasse}

From now on in this section, \(P\) is a connected finite poset.  This is the only case needed after Proposition \ref{prop:least-element-obstruction}, since the existence of a least element implies connectedness.

\subsection{The local stochastic cone}

Set
\[
        H_P=\left\{h\in\R^P:\sum_{x\in P}h_x=0\right\},
\]
the affine tangent space of the simplex \(\Delta_P\).  Define
\[
        K_P=
        \{h\in H_P:h(U)\ge0\text{ for every upper set }U\subseteq P\}.
\]
Then for \(p,q\in\Delta_P\),
\[
        p\le q
        \quad\Longleftrightarrow\quad
        q-p\in K_P.
\]
Thus the stochastic order is, locally and globally on the simplex, the cone order induced by \(K_P\).

We use the convex-analytic terminology fixed in Subsection~\ref{subsec:finite-dimensional-notation}.

\begin{lemma}[Hasse-flow formula]\label{lem:hasse-flow}
If the undirected Hasse graph of \(P\) is connected, then
\[
        K_P=\cone\{\delta_y-\delta_x:x\prec y\}.
\]
Thus \(K_P\) is the cone of nonnegative upward flows on the Hasse diagram.
Moreover, \(K_P\) is closed and pointed, \(K_P-K_P=H_P\), and \(K_P\) has
nonempty relative interior in \(H_P\).
\end{lemma}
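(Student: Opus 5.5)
The plan is to prove the two inclusions separately and then read off the structural properties. Write \(C=\cone\{\delta_y-\delta_x:x\prec y\}\). First I show \(C\subseteq K_P\). Each generator \(\delta_y-\delta_x\) with \(x\prec y\) lies in \(H_P\). For an upper set \(U\), if \(x\in U\) then also \(y\in U\), so \((\delta_y-\delta_x)(U)=[y\in U]-[x\in U]\ge0\). Since \(K_P\) is defined by finitely many linear inequalities, it is a closed convex cone, and it therefore contains \(C\).

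The reverse inclusion \(K_P\subseteq C\) is the main step; I would argue it by duality. The cone \(C\) is finitely generated, hence polyhedral and closed. By the bipolar theorem inside \(H_P\), it suffices to show \(C^*\subseteq K_P^*\), with duals taken in \(H_P\). Take \(f\in\R^P\), modulo constants, with \(f_y\ge f_x\) whenever \(x\prec y\). Since every relation \(x<y\) is a chain of covers, \(f\) is monotone on \(P\). Choose the constant so that \(f\ge0\) and write the layer-cake decomposition
\[
f=\sum_{t} (t_{k}-t_{k-1})\,\one_{\{f\ge t_k\}},
\]
taken over the distinct values of \(f\). Each superlevel set \(\{f\ge t\}\) is an upper set. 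Hence for \(h\in K_P\),
\[
f\cdot h=\sum_k (t_k-t_{k-1})\,h(\{f\ge t_k\})\ge0 .
\]
This gives \(f\in K_P^*\), and hence \(K_P=C^{**}\cap\ldots=C\). The delicate point is the bookkeeping: \(H_P\)-duality means working modulo constant vectors, and \(h(P)=0\) makes the constant shift harmless.

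Pointedness follows directly. If \(h\) and \(-h\) both lie in \(K_P\), then \(h(U)=0\) for every upper set \(U\). Applying this to \(U=\uparrow x\) and using Möbius inversion on the finite poset gives \(h_x=0\); more simply, induct downward from the maximal elements. Closedness was already noted.

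For \(K_P-K_P=H_P\) I use connectedness. The differences \(\pm(\delta_y-\delta_x)\) along the edges of the connected undirected Hasse graph span all \(\delta_a-\delta_b\) by telescoping along paths, and these vectors span \(H_P\). A convex cone whose linear span is \(H_P\) has nonempty relative interior in \(H_P\); concretely, the sum of all generators is an interior point. This is the only place where connectedness enters.
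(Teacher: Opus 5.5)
Your proposal is correct and follows essentially the paper's route: characterize the dual of $\cone\{\delta_y-\delta_x:x\prec y\}$ in $H_P$ by cover-monotone functions modulo constants, decompose these by the layer-cake formula into upper-set indicators, apply the bipolar theorem, and obtain $K_P-K_P=H_P$ by telescoping along Hasse paths. The only real variation is pointedness, where you use $h(\uparrow x)=0$ with downward induction from the maximal elements, working directly from the inequality description of $K_P$, instead of the paper's rank function $\rho$ that strictly increases along covers; both arguments are valid.
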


\begin{proof}
For a cover edge \(x\prec y\), the vector
\[
        \delta_y-\delta_x
\]
is its incidence vector: it has coordinate \(-1\) at \(x\), coordinate \(+1\)
at \(y\), and coordinate \(0\) elsewhere.  Its coordinate sum is \(0\), so it
belongs to \(H_P\).  Put
\[
        C_P=\cone\{\delta_y-\delta_x:x\prec y\}\subseteq H_P.
\]
We first prove \(C_P=K_P\) by identifying the dual cone \(C_P^*\subseteq
H_P^*\).

A linear functional on \(H_P\) can be represented by a function
\(\varphi:P\to\R\) via
\[
        h\longmapsto \sum_{z\in P}\varphi(z)h_z .
\]
This representative is unique only modulo constants.  Indeed, if \(c\in\R\),
then for every \(h\in H_P\),
\[
        \sum_{z\in P}c h_z=c\sum_{z\in P}h_z=0.
\]
Thus adding a constant function to \(\varphi\) does not change the induced
functional on \(H_P\).  We write \(\one_A\) for the indicator function of a
subset \(A\subseteq P\); in particular, \(\one_P\) is the function constantly
equal to \(1\) on \(P\).

The functional represented by \(\varphi\) is nonnegative on \(C_P\) if and only
if it is nonnegative on each generator \(\delta_y-\delta_x\), \(x\prec y\).  But
\[
        \varphi(\delta_y-\delta_x)=\varphi(y)-\varphi(x).
\]
Hence nonnegativity on \(C_P\) is equivalent to
\[
        \varphi(x)\le \varphi(y)\qquad(x\prec y).
\]
For a finite poset, monotonicity on all cover relations is equivalent to
monotonicity on the whole order: if \(x\le y\), then there is a saturated chain
\[
        x=x_0\prec x_1\prec\cdots\prec x_n=y,
\]
and the cover inequalities give
\[
        \varphi(x)=\varphi(x_0)\le\varphi(x_1)\le\cdots\le\varphi(x_n)=\varphi(y).
\]
Thus the functionals in \(C_P^*\) are represented, modulo constants, by
order-preserving functions \(\varphi:P\to\R\).

Let \(\varphi\) be order-preserving, and let its distinct values be
\[
        c_0<c_1<\cdots<c_m.
\]
For \(1\le k\le m\), set
\[
        U_k=\{x\in P:\varphi(x)\ge c_k\}.
\]
Each \(U_k\) is an upper set: if \(x\in U_k\) and \(x\le y\), then
\(\varphi(y)\ge\varphi(x)\ge c_k\), so \(y\in U_k\).  The usual layer
identity gives
\[
        \varphi
        =c_0\one_P+
        \sum_{k=1}^m(c_k-c_{k-1})\one_{U_k}.
\]
Indeed, if \(\varphi(x)=c_j\), then \(x\in U_k\) exactly for \(1\le k\le j\),
and the right-hand side at \(x\) is
\[
        c_0+\sum_{k=1}^j(c_k-c_{k-1})=c_j=\varphi(x).
\]
The constant term disappears when restricted to \(H_P\), because
\[
        (c_0\one_P)(h)=c_0\sum_{x\in P}h_x=0\qquad(h\in H_P).
\]
Therefore, in \(H_P^*\),
\[
        \varphi|_{H_P}
        =\sum_{k=1}^m(c_k-c_{k-1})\one_{U_k}|_{H_P},
\]
a nonnegative linear combination of restrictions of upper-set indicators.

Conversely, if \(U\subseteq P\) is upper, then \(\one_U|_{H_P}\) is
nonnegative on each generator of \(C_P\).  For \(x\prec y\),
\[
        \one_U(\delta_y-\delta_x)=\one_U(y)-\one_U(x)\ge0,
\]
because upperness forbids the case \(x\in U\) and \(y\notin U\).  Hence
\[
        C_P^*=
        \cone\{\one_U|_{H_P}:U\subseteq P\text{ upper}\}\subseteq H_P^*.
\]
Taking the dual cone again,
\[
\begin{aligned}
        C_P^{**}
        &=\{h\in H_P:\ell(h)\ge0\text{ for every }\ell\in C_P^*\} \\
        &=\{h\in H_P:\one_U(h)\ge0\text{ for every upper set }U\subseteq P\}.
\end{aligned}
\]
The second equality holds because \(C_P^*\) is generated by the upper-set
functionals.  Since \(\one_U(h)=h(U)\), the last set is precisely \(K_P\).  The
cone \(C_P\) is finitely generated, hence closed and polyhedral; by the
finite-dimensional bipolar theorem, \(C_P=C_P^{**}\).  Thus \(C_P=K_P\).

It remains to record the structural consequences used later.  The incidence
vectors of the cover edges linearly span \(H_P\).  To see this, fix a vertex
\(x_0\in P\).  Since the undirected Hasse graph is connected, for every
\(x\in P\) there is an undirected path
\[
        x_0=x_0',x_1',\ldots,x_n'=x.
\]
The signed sum of the incidence vectors along this path telescopes to
\[
        \delta_x-\delta_{x_0}.
\]
As \(x\) ranges over \(P\setminus\{x_0\}\), the vectors
\(\delta_x-\delta_{x_0}\) span
\[
        H_P=\left\{h\in\R^P:\sum_{x\in P}h_x=0\right\},
\]
because every \(h\in H_P\) can be written as
\[
        h=\sum_{x\ne x_0}h_x(\delta_x-\delta_{x_0}).
\]
Therefore \(K_P-K_P=H_P\).  Since \(K_P\) is a closed polyhedral cone whose
linear span is \(H_P\), it has nonempty relative interior in \(H_P\).

Finally, \(K_P\) is pointed.  Choose a function \(\rho:P\to\R\) which is
strictly increasing along covers; for instance, let \(\rho(x)\) be the maximum
length of a saturated chain ending at \(x\).  If \(x\prec y\), any longest
saturated chain ending at \(x\) extends to one ending at \(y\), so
\(\rho(y)\ge\rho(x)+1\).  Hence for every nonzero
\[
        h=\sum_{x\prec y}t_{xy}(\delta_y-\delta_x)\in K_P,
        \qquad t_{xy}\ge0,
\]
we have
\[
        \rho\cdot h=
        \sum_{x\prec y}t_{xy}\bigl(\rho(y)-\rho(x)\bigr)>0.
\]
Thus no nonzero element of \(K_P\) can also belong to \(-K_P\), and
\(K_P\cap(-K_P)=\{0\}\).
\end{proof}

\begin{lemma}[Cover edges are extreme]\label{lem:cover-extreme}
For every cover relation \(x\prec y\), the ray
\[
        \R_{\ge0}(\delta_y-\delta_x)
\]
is an extreme ray of \(K_P\).
\end{lemma}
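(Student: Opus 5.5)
The plan is to exhibit, for a fixed cover \(x\prec y\), a linear functional on \(H_P\) that is nonnegative on \(K_P\) and vanishes on \(K_P\) exactly along \(\R_{\ge0}(\delta_y-\delta_x)\). That ray is then an exposed face of \(K_P\), hence an extreme ray. By Lemma~\ref{lem:hasse-flow}, nonnegative functionals on \(K_P\) are the same as order-preserving functions \(\varphi:P\to\R\) modulo constants. I would build \(\varphi\) from upper sets and check it on the cover generators.

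For the construction, take the upper set \(U=\uparrow y\) and the upper set \(V=P\setminus\downarrow x\). Upperness of \(V\) is clear: if \(z\notin\downarrow x\) and \(z\le w\), then \(w\notin\downarrow x\). Set
\[
\varphi=\one_{\uparrow y}+\one_{P\setminus\downarrow x}.
\]
This is a sum of indicators of upper sets, so it is order-preserving. For a cover \(a\prec b\), the value \(\varphi(b)-\varphi(a)\ge0\) is a sum of two nonnegative terms, and it vanishes only if both terms vanish.

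The first term vanishes iff \(a\in\uparrow y\) or \(b\notin\uparrow y\). The second vanishes iff \(b\in\downarrow x\) or \(a\notin\downarrow x\). The edge \(x\prec y\) itself gives the value \(1+1=2\), not \(0\). So instead I would read the functional as the sum of the two increments. For \((a,b)=(x,y)\) the value is \(2\). For any other cover I want to show it is strictly positive. That fails, however: a cover lying entirely inside \(\uparrow y\) has value \(0\).

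So this \(\varphi\) is the wrong shape, and I would repair it by replacing the exposing-functional approach with a direct decomposition argument. Suppose \(\delta_y-\delta_x=\mathbf a+\mathbf b\) with \(\mathbf a,\mathbf b\in K_P\). Write
\[
\mathbf a=\sum t_{uv}(\delta_v-\delta_u),\qquad \mathbf b=\sum s_{uv}(\delta_v-\delta_u),
\]
with nonnegative coefficients, and put \(f=t+s\), a nonnegative flow with net divergence \(\delta_y-\delta_x\).

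Next, pair with the height function \(\rho\) from the pointedness argument of Lemma~\ref{lem:hasse-flow}, choosing it so that \(\rho(v)-\rho(u)\ge1\) on covers. This gives
\[
\rho(y)-\rho(x)=\sum f_{uv}\bigl(\rho(v)-\rho(u)\bigr).
\]
Choose \(\rho\) adapted to the edge, with \(\rho(y)-\rho(x)=1\); the longest-chain rank may not achieve this. I will construct such a \(\rho\) as follows. Set \(\rho(z)=N\cdot(\text{longest-chain rank})+\one_{\uparrow y}(z)\), and then fix \(\rho(y)\) appropriately. This works as long as \(x\prec y\) is a cover, so that no element lies strictly between \(x\) and \(y\).

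The hard part is a cleaner route, and I would use the flow-decomposition one. A nonnegative upward flow \(f\) on an acyclic directed graph with divergence \(\delta_y-\delta_x\) decomposes into positive multiples of directed paths from \(x\) to \(y\). There are no directed cycles to worry about, since the Hasse diagram is acyclic. But a directed Hasse path from \(x\) to \(y\) is a saturated chain \(x<\cdots<y\), and because \(x\prec y\) the only such chain is the single edge. So \(f\) is supported on the edge \(xy\) with \(f_{xy}=1\), and therefore \(t\) and \(s\) are both supported on that edge too. This gives \(\mathbf a,\mathbf b\in\R_{\ge0}(\delta_y-\delta_x)\).

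The main care point is justifying the path decomposition. I would do this by the standard greedy argument. Start at \(x\) and follow edges carrying positive flow; acyclicity plus conservation at intermediate vertices forces the walk to end at \(y\). Subtract the minimum flow along the path and induct on the support size.
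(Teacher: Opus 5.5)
Your final argument is correct and is essentially the paper's proof. It merges the cover-edge representations of $\mathbf a$ and $\mathbf b$ into a single nonnegative flow with divergence $\delta_y-\delta_x$, decomposes that flow greedily into directed $x$--$y$ paths using acyclicity of the Hasse diagram, and observes that $x\prec y$ leaves only the single edge. The earlier exposing-functional attempt, which you correctly saw fails, and the unfinished height-function construction play no role and should be dropped from a write-up.
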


\begin{proof}
By Lemma~\ref{lem:hasse-flow},
\[
        K_P=\cone\{\delta_v-\delta_u:u\prec v\}.
\]
It is enough to show that if
\[
        \delta_y-\delta_x
        =\sum_{u\prec v} t_{uv}(\delta_v-\delta_u),
        \qquad t_{uv}\ge0,
\]
then \(t_{uv}=0\) for every \((u,v)\ne(x,y)\).

Regard the numbers \(t_{uv}\) as a nonnegative flow on the directed Hasse
diagram, directed from \(u\) to \(v\) whenever \(u\prec v\).  The displayed
equality says that the resulting net vector has value \(-1\) at \(x\), value
\(+1\) at \(y\), and value \(0\) at every other vertex.  Thus \(x\) is the only
source, \(y\) is the only sink, and every other vertex satisfies flow
conservation.

This flow decomposes into directed paths from \(x\) to \(y\).  Starting at
\(x\), choose an outgoing edge with positive flow.  Whenever a positive-flow
path reaches a vertex \(z\ne y\), conservation at \(z\) implies that some
outgoing edge from \(z\) also has positive flow.  The Hasse diagram is acyclic,
so this process cannot continue forever and cannot form a directed cycle; hence
it must eventually reach \(y\).  Subtract the minimum flow along the obtained
path and repeat.  Since each step removes positive flow from at least one edge,
the procedure terminates after finitely many steps.

Since \(x\prec y\), there is no element strictly between \(x\) and \(y\).  A
directed path from \(x\) to \(y\) with more than one edge would contain such an
intermediate element.  Hence the only directed path from \(x\) to \(y\) is the
single edge \(x\prec y\).  Therefore no edge other than \(x\prec y\) can carry
positive flow, so all coefficients except possibly \(t_{xy}\) vanish.

Now suppose
\[
        \delta_y-\delta_x=a+b,
        \qquad a,b\in K_P.
\]
Writing \(a\) and \(b\) as nonnegative combinations of cover-edge vectors and
applying the preceding conclusion to their sum shows that both \(a\) and \(b\)
are nonnegative multiples of \(\delta_y-\delta_x\).  Hence
\(\R_{\ge0}(\delta_y-\delta_x)\) is an extreme ray of \(K_P\).
\end{proof}

\begin{proposition}[Hasse cycles give non-simplicial cones]\label{prop:cycle-nonsimplicial}
If the undirected Hasse graph of a connected finite poset \(P\) contains a
cycle, then \(K_P\) is not simplicial.
\end{proposition}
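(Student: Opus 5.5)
The plan is to count extreme rays and compare with the dimension. By Lemma~\ref{lem:hasse-flow}, \(K_P\) is a pointed polyhedral cone whose linear span is \(H_P\), and \(\dim H_P=|P|-1\). By the convention fixed in Subsection~\ref{subsec:finite-dimensional-notation}, such a cone is simplicial if and only if it has exactly \(|P|-1\) extreme rays. Lemma~\ref{lem:cover-extreme} shows that each cover relation \(x\prec y\) yields an extreme ray \(\R_{\ge0}(\delta_y-\delta_x)\). Distinct cover relations give distinct rays: the vector \(\delta_y-\delta_x\) determines the ordered pair \((x,y)\), since it has \(-1\) exactly at \(x\) and \(+1\) exactly at \(y\), and two such vectors are positive multiples of one another only if they are equal. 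Hence the number of extreme rays of \(K_P\) is at least the number \(|E|\) of edges of the undirected Hasse graph. In fact it equals \(|E|\), because every extreme ray of a finitely generated cone is spanned by one of its generators, but only the lower bound is needed.

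Next I would use an elementary graph fact. A connected graph on \(|P|\) vertices has at least \(|P|-1\) edges, with equality if and only if it is a tree. The undirected Hasse graph is connected by hypothesis and contains a cycle, so it is not a tree. Therefore \(|E|\ge |P|\). To see this directly, deleting one edge of the cycle leaves the graph connected, so \(|E|-1\ge|P|-1\).

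Combining the two steps, \(K_P\) has at least \(|P|>|P|-1=\dim H_P\) extreme rays, so it is not simplicial. As an alternative check that avoids the extreme-ray count, one can argue by linear dependence. Traverse the cycle \(v_0,v_1,\dots,v_k=v_0\) and take the signed sum of the incidence vectors along it. This sum telescopes to \(0\), so the cover-edge vectors are linearly dependent. A simplicial cone would instead have exactly \(\dim H_P\) extreme rays spanned by linearly independent vectors, whereas here all cover-edge vectors are extreme (Lemma~\ref{lem:cover-extreme}) and they are dependent.

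There is no serious obstacle. The only point needing care is that each cover edge contributes a \emph{distinct} extreme ray, and this follows directly from the coordinate description of \(\delta_y-\delta_x\).
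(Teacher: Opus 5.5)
Your proposal is correct and follows essentially the same route as the paper. The paper likewise obtains at least \(m\ge n\) pairwise distinct extreme rays from Lemma~\ref{lem:cover-extreme} and the coordinate description of \(\delta_y-\delta_x\), and compares this with the \(n-1\) extreme rays a simplicial cone in \(H_P\) must have. Your telescoping-cycle alternative is also valid, but it is not needed.
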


\begin{proof}
Let \(n=|P|\), and let
\[
        m=\#\{(x,y)\in P\times P:x\prec y\}
\]
be the number of cover edges of \(P\), equivalently the number of edges of the
undirected Hasse graph.  The space
\[
        H_P=\left\{h\in\R^P:\sum_{x\in P}h_x=0\right\}
\]
has dimension \(n-1\), since it is the kernel of the nonzero linear functional
\(h\mapsto\sum_{x\in P}h_x\) on the \(n\)-dimensional space \(\R^P\).

Because the undirected Hasse graph is connected and contains a cycle, its
number of edges satisfies \(m\ge n\).  Indeed, a connected graph with \(n\)
vertices is a tree exactly when it has \(n-1\) edges; a connected graph with a
cycle therefore has at least \(n\) edges.

For every cover relation \(x\prec y\), Lemma~\ref{lem:cover-extreme} shows that
\[
        \R_{\ge0}(\delta_y-\delta_x)
\]
is an extreme ray of \(K_P\).  These rays are pairwise distinct: the vector
\(\delta_y-\delta_x\) has a unique negative coordinate, at \(x\), and a unique
positive coordinate, at \(y\).  Thus two different cover edges cannot determine
the same ray.

Consequently, \(K_P\) has at least \(m\) distinct extreme rays, hence at least
\(n\) distinct extreme rays.  On the other hand, a simplicial cone in a
\(d\)-dimensional vector space has exactly \(d\) extreme rays.  Since
\(K_P\subseteq H_P\) and \(\dim H_P=n-1\), a simplicial cone in \(H_P\) would
have exactly \(n-1\) extreme rays.  This is impossible.  Therefore \(K_P\) is
not simplicial.
\end{proof}

\subsection{The local approximation supplied by RB}

\begin{lemma}[Strict order inside the simplex]\label{lem:strict-simplex}
Let \(A\subseteq\Delta_P\) be directed with supremum \(v\in\Delta_P\), and let \(u\in\Delta_P\).  If
\[
        v-u\in\relint(K_P),
\]
then there exists \(a\in A\) such that \(u\le a\).
\end{lemma}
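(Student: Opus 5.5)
We plan to translate the relative-interior hypothesis into strict inequalities on upper-set evaluations, and then use the formula for directed suprema from Lemma~\ref{lem:finite-directed-sup}.

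\emph{Step 1: strict inequalities from the relative interior.} By Lemma~\ref{lem:hasse-flow}, \(K_P\) is the polyhedral cone in \(H_P\) cut out by the finitely many inequalities \(h(U)\ge0\), one for each upper set \(U\). The functional \(h\mapsto h(U)\) vanishes identically on \(H_P\) exactly when \(\one_U\) is constant on \(P\), that is, when \(U=\emptyset\) or \(U=P\). Call the remaining upper sets \emph{proper}.

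We claim that every \(h\in\relint(K_P)\) satisfies \(h(U)>0\) for every proper upper set \(U\). Suppose instead that \(h(U)=0\) for some proper \(U\). Since \(K_P\) spans \(H_P\), the functional \(\one_U|_{H_P}\) is nonzero. So there is a vector \(k\in H_P\) with \(k(U)<0\), and then \(h+\varepsilon k\notin K_P\) for every \(\varepsilon>0\). This contradicts the fact that \(h\) is an interior point of \(K_P\) relative to \(H_P\). Applying the claim to \(h=v-u\) gives
\[
  u(U)<v(U)\qquad\text{for every proper upper set }U.
\]

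\emph{Step 2: approximation along \(A\).} Fix a proper upper set \(U\). By Lemma~\ref{lem:finite-directed-sup},
\[
  v(U)=\sup_{a\in A}a(U).
\]
Since \(u(U)<v(U)\), there is some \(a_U\in A\) with \(u(U)<a_U(U)\). There are only finitely many upper sets, and \(A\) is directed, so we can choose a single \(a\in A\) lying above all the \(a_U\).

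Monotonicity of upper-set evaluation then gives \(a(U)\ge a_U(U)>u(U)\) for every proper upper set \(U\). For the two improper upper sets the required inequality is automatic, since \(a(\emptyset)=u(\emptyset)=0\) and \(a(P)=u(P)=1\). Hence \(u\le a\). If \(A\) happens to have no proper upper sets to handle, any element of \(A\) works, because \(A\) is nonempty.

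The only step that needs care is the relative-interior characterization in Step~1. It is essentially the standard fact that the relative interior of a polyhedral cone consists of the points satisfying all its non-identically-vanishing defining inequalities strictly.
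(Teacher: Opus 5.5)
Your proof is correct and is essentially the paper's argument. You derive $u(U)<v(U)$ on every nonempty proper upper set from $v-u\in\relint(K_P)$, then pick finitely many $a_U\in A$ and a common upper bound in $A$. The small differences are that you get $\one_U|_{H_P}\neq 0$ from the fact that only constants annihilate $H_P$, where the paper uses a cover edge crossing $U$, and that in your last paragraph ``$A$ has no proper upper sets'' should read ``$P$''.
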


\begin{proof}
By Lemma~\ref{lem:finite-directed-sup},
\[
        v(U)=\sup_{a\in A}a(U)
\]
for every upper set \(U\subseteq P\).  For \(U=\emptyset\) and \(U=P\),
all points of \(\Delta_P\) have the same value on \(U\), namely \(0\) and
\(1\), respectively.

Let \(U\) be a nonempty proper upper set.  Since the undirected Hasse graph is
connected, there is an undirected path from some point outside \(U\) to some
point inside \(U\).  Let \(x,y\) be the first adjacent pair on this path with
\(x\notin U\) and \(y\in U\).  Since \(U\) is upper, the cover direction must be
\(x\prec y\); otherwise \(y\prec x\) and \(y\in U\) would imply \(x\in U\).
Thus
\[
        (\delta_y-\delta_x)(U)=1.
\]
Hence the functional
\[
        \ell_U:H_P\to\mathbb R,\qquad \ell_U(h)=h(U),
\]
is nonzero.  It is also nonnegative on \(K_P\), by the definition of \(K_P\).

Since the Hasse graph is connected, the cover-edge incidence vectors span
\(H_P\).  By Lemma~\ref{lem:hasse-flow},
\[
        K_P=\operatorname{cone}\{\delta_y-\delta_x:x\prec y\},
\]
and therefore
\[
        K_P-K_P=H_P.
\]
Thus \(K_P\) is full-dimensional in \(H_P\).  We use the elementary fact that every nonzero linear functional which is nonnegative on a full-dimensional convex cone is strictly positive on its relative interior.  Indeed, if such a functional \(\ell\) vanished at \(w\in\relint(K_P)\), choose \(k\in K_P\) with \(\ell(k)>0\); then \(w-\varepsilon k\in K_P\) for sufficiently small \(\varepsilon>0\), but \(\ell(w-\varepsilon k)<0\), a contradiction.  Since \(v-u\in\relint(K_P)\), we obtain
\[
        0<\ell_U(v-u)=v(U)-u(U).
\]
Hence
\[
        u(U)<v(U)
\]
for every nonempty proper upper set \(U\).

There are only finitely many upper sets.  For each nonempty proper upper set
\(U\), choose \(a_U\in A\) such that
\[
        u(U)<a_U(U),
\]
which is possible because \(v(U)=\sup_{a\in A}a(U)\).  Since \(A\) is directed
and only finitely many \(a_U\)'s have been chosen, there exists \(a\in A\) such
that
\[
        a_U\le a
\]
for all such \(U\).  Therefore
\[
        u(U)<a_U(U)\le a(U)
\]
for every nonempty proper upper set \(U\).  For \(U=\emptyset\) and \(U=P\), we
also have \(u(U)=a(U)\).  Hence
\[
        u(U)\le a(U)
\]
for every upper set \(U\subseteq P\).  By the upper-set characterization of the
stochastic order, this means
\[
        u\le a.
\]
\end{proof}

\begin{proposition}[RB local approximation]\label{prop:RB-local-approximants}
Assume that \(\Vone(P)\) admits a directed finite-deflation approximation of the identity.  Let
\[
        L\Subset\relint(\Delta_P)
\]
be compact and convex with nonempty relative interior in the affine hull of \(\Delta_P\).  Then there are finite-valued \(K_P\)-monotone maps
\[
        Q_n:L\longrightarrow\Delta_P
\]
such that
\[
        \lim_{n\to\infty}\sup_{q\in L}\|Q_n(q)-q\|=0.
\]
\end{proposition}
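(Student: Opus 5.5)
The plan is to take $Q_n=r_n|_L$ for a suitable sequence $r_n$ of deflations from the directed family $(r_i)_{i\in I}$, and to show that $\sup_{q\in L}\|r_i(q)-q\|\to0$ along the directed index set. Each $r_i$ is monotone for the stochastic order, which by Lemma~\ref{lem:hasse-flow} coincides with the $K_P$-order on $\Delta_P$. Hence $r_i|_L$ is $K_P$-monotone and finite-valued. Once uniform convergence along the net is established, a sequence $i_1\le i_2\le\cdots$ with $\sup_L\|r_{i_n}(q)-q\|<1/n$ finishes the proof.

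The convergence is a compactness argument driven by Lemma~\ref{lem:strict-simplex}. Fix $\varepsilon>0$ and choose a unit vector $e\in\relint(K_P)$.

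\textbf{Step 1: a uniform shift.} Because $L\Subset\relint(\Delta_P)$, there is $t\in(0,\varepsilon)$ such that $q-te\in\Delta_P$ and $q-te+h\in\Delta_P$ for every $q\in L$ and every $h\in H_P$ with $\|h\|\le t$. Since $K_P$ is closed, full-dimensional in $H_P$ and pointed, we may also shrink $t$ so that $te-h\in\relint(K_P)$ whenever $\|h\|\le t/2$.

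\textbf{Step 2: an open cover.} For $q_0\in L$, put $u=q_0-te$. Since $\sup_i r_i(q_0)=q_0$ and $q_0-u=te\in\relint(K_P)$, Lemma~\ref{lem:strict-simplex} gives an index $i(q_0)$ with $u\le r_{i(q_0)}(q_0)$. For $q\in L$ with $\|q-q_0\|<\delta$ (where $\delta\le t/2$), write $q-te=u+(q-q_0)$.

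\textbf{Step 3: the lower bound near $q_0$.} Apply Lemma~\ref{lem:strict-simplex} again, this time to the point $q$ rather than $q_0$. The cleaner route is to use the point $u'=q_0-\tfrac{t}{2}e$ and prove the claim: if $\|q-q_0\|$ is small, then $q\ge u'$ and $q-2te\le u'$. Indeed $q-u'=\tfrac t2e+(q-q_0)\in K_P$ for small $\|q-q_0\|$, and $u'-(q-2te)=\tfrac{3t}{2}e-(q-q_0)\in K_P$ as well. Now choose $i(q_0)$ by Lemma~\ref{lem:strict-simplex} with $u'\le r_{i(q_0)}(u')$, which is legitimate because $u'\in\relint(\Delta_P)$ and $u'-(u'-\tfrac t2e)\in\relint(K_P)$. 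A cleaner version of this step targets $u'-\tfrac t2e\le r_{i}(u')$. By monotonicity, for all $j\ge i(q_0)$ and all nearby $q$,
\[
q-2te\le u'-\tfrac t2e\le r_j(u')\le r_j(q)\le q.
\]

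\textbf{Step 4: from order bounds to a norm bound.} Step 3 gives $q-r_j(q)\in K_P\cap(2te-K_P)$, an order interval. Since $K_P$ is closed and pointed, this interval is compact, with diameter $O(t)$ uniformly. So $\|r_j(q)-q\|\le Ct$, where $C$ depends only on $K_P$ and $e$.

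\textbf{Step 5: finish by compactness.} Finitely many balls around points $q_0$ cover $L$. Directedness then gives a single index above all the corresponding $i(q_0)$, and that index works uniformly on $L$.

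The main obstacle is the bookkeeping in Steps 1--3: keeping every auxiliary point $q_0-se$ inside $\Delta_P$, and ensuring the perturbations $q-q_0$ stay inside the relative-interior margin of $K_P$. Both rely on $L$ being compactly contained in $\relint(\Delta_P)$ and on $e$ being an interior direction of $K_P$. Convexity and nonempty interior of $L$ play no role here; they matter only for later use.
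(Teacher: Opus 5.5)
Your argument is correct and essentially matches the paper's proof: shift by an interior direction $e$ of $K_P$, and apply Lemma~\ref{lem:strict-simplex} to the directed set $\{r_i(q_0-\frac t2e)\}$ to get $q_0-te\le r_{i(q_0)}(q_0-\frac t2e)$ (your two points play the roles of the paper's $v_p=p-\frac{\varepsilon}{2}e$ and $u_p=p-\frac{3\varepsilon}{4}e$); then sandwich $q-2te\le r_j(q)\le q$ on a neighbourhood of $q_0$, take a finite subcover and a common upper index, and bound $q-r_j(q)$ in the compact order interval $2t[0,e]_{K_P}$. The one real slip is in Step~1: the requirement $te-h\in\relint(K_P)$ for $\|h\|\le t/2$ is invariant under rescaling $t$, so shrinking $t$ cannot achieve it; instead fix $c>0$ with $e+h\in\relint(K_P)$ for all $h\in H_P$ with $\|h\|\le c$, and take $\delta=ct/2$, which also yields the inequality your chain actually uses, namely $q-2te\le u'-\frac t2e$ (i.e.\ $te-(q-q_0)\in K_P$), rather than the weaker $q-2te\le u'$ that you checked.
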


\begin{proof}
Let \((r_i)_{i\in I}\) be a directed family of finite-range deflations with pointwise supremum \(\id\).  If \(H_P=\{0\}\), then \(\Delta_P\) is a single point and the conclusion is immediate.  Assume \(H_P\ne\{0\}\).  Choose \(e\in\relint(K_P)\).  Since \(e\in H_P\), the points \(q-te\) remain in the affine hull of \(\Delta_P\).  Since \(L\) is compact and contained in the relatively open set \(\relint(\Delta_P)\), there is \(\varepsilon_0>0\) such that
\[
        L-[0,\varepsilon_0]e\subseteq\relint(\Delta_P).
\]
In what follows take \(0<\varepsilon<\varepsilon_0\).

For \(p\in L\), set
\[
        u_p=p-\frac{3\varepsilon}{4}e,
        \qquad
        v_p=p-\frac{\varepsilon}{2}e.
\]
Then \(v_p-u_p=(\varepsilon/4)e\in\relint(K_P)\).  Since the family \((r_i)\) is directed in the pointwise order, the set \(\{r_i(v_p):i\in I\}\) is directed; its supremum is \(v_p\) because \(\sup_i r_i=\id\).  Applying Lemma \ref{lem:strict-simplex} gives \(i_p\in I\) such that
\[
        u_p\le r_{i_p}(v_p).
\]

Define
\[
        N_p=\{q\in L:
        u_p-(q-\varepsilon e)\in\relint(K_P),
        \quad
        q-v_p\in\relint(K_P)\}.
\]
The neighbourhood \(N_p\) is chosen so that \(q-\varepsilon e\) lies strictly below \(u_p\), while \(v_p\) lies strictly below \(q\).  The sets \(N_p\) are relatively open in \(L\) and cover \(L\).  Choose a finite subcover \(N_{p_1},\ldots,N_{p_N}\).  Directedness gives an index \(i\) above all \(i_{p_k}\).  If \(q\in N_{p_k}\), then
\[
        q-\varepsilon e
        \le u_{p_k}
        \le r_{i_{p_k}}(v_{p_k})
        \le r_i(q)
        \le q.
\]
Thus \(Q_\varepsilon=r_i|_L\) is finite-valued and \(K_P\)-monotone.  Moreover,
\[
        q-\varepsilon e\le Q_\varepsilon(q)\le q
        \qquad(q\in L).
\]
The order interval \([0,e]_{K_P}=K_P\cap(e-K_P)\) is compact.  Indeed, it is closed; if it were unbounded, a normalized unbounded sequence would have a nonzero limit lying in both \(K_P\) and \(-K_P\), contradicting pointedness.  From \(q-\varepsilon e\le Q_\varepsilon(q)\le q\) we have
\[
        q-Q_\varepsilon(q)\in \varepsilon[0,e]_{K_P}.
\]
Thus
\[
        \sup_{q\in L}\|Q_\varepsilon(q)-q\|
        \le \varepsilon\max_{z\in[0,e]_{K_P}}\|z\|.
\]
Taking \(\varepsilon_n\downarrow0\) gives the required sequence.
\end{proof}

\section{The Euclidean finite-step obstruction}\label{sec:euclidean}

In this section we prove the finite-dimensional obstruction used in the
cycle case.  We use throughout the Euclidean conventions of
Subsection~\ref{subsec:finite-dimensional-notation}.  Let
\(K\subseteq\R^d\) be a proper cone.  Thus \(K-K=\R^d\), the cone order
\(\le_K\) is a partial order, and standard finite-dimensional cone duality gives
\(\Int K^*\ne\emptyset\).  Define the matrix cone
\[
        \mathcal R_K=
        \cone\{\mathbf v\mathbf y^T:\mathbf v\in K,\ \mathbf y\in K^*\}
        \subseteq \operatorname{End}(\R^d).
\]

\subsection{The matrix cone of a proper cone}

The following finite-dimensional cone-theoretic fact is used in the obstruction
argument; we include the proof for completeness.

\begin{lemma}[Matrix-cone obstruction]\label{lem:matrix-cone-obstruction}
If \(K\) is a proper cone, then the cone \(\mathcal R_K\) is closed.  If
\[
        I_d\in\mathcal R_K,
\]
then \(K\) is simplicial.
\end{lemma}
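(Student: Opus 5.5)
The plan is to prove the two assertions separately. For closedness I will use a compact base; for the simplicial conclusion I will use extremality of the rays of \(K\) against a rank-one decomposition of \(I_d\).

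\emph{Closedness.} Since \(K\) is proper, I fix \(\mathbf e\in\Int K\) and \(\boldsymbol\eta\in\Int K^*\), so that \(\boldsymbol\eta\cdot\mathbf v>0\) for every nonzero \(\mathbf v\in K\) and \(\mathbf y\cdot\mathbf e>0\) for every nonzero \(\mathbf y\in K^*\). Consider the linear functional \(T(A)=\boldsymbol\eta\cdot(A\mathbf e)\) on \(\operatorname{End}(\R^d)\). On a generator it gives \(T(\mathbf v\mathbf y^T)=(\boldsymbol\eta\cdot\mathbf v)(\mathbf y\cdot\mathbf e)\), which is positive unless the generator vanishes.

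The sets \(B=\{\mathbf v\in K:\boldsymbol\eta\cdot\mathbf v=1\}\) and \(B^*=\{\mathbf y\in K^*:\mathbf y\cdot\mathbf e=1\}\) are compact; this is the standard compact-base fact for closed pointed cones. So \(G=\{\mathbf v\mathbf y^T:\mathbf v\in B,\mathbf y\in B^*\}\) is compact, and by Carathéodory \(\conv G\) is compact. Moreover \(\conv G\subseteq\{T=1\}\), so \(0\notin\conv G\). Every nonzero element of \(\mathcal R_K\) is a positive multiple of a point of \(\conv G\), and the cone over a compact convex set avoiding \(0\) is closed. This is the routine normalization argument: take a convergent sequence \(t_k g_k\), note that \(t_k\) is bounded because \(T(g_k)=1\), and pass to subsequences.

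\emph{Simplicial conclusion.} Suppose \(I_d=\sum_{i=1}^N\mathbf v_i\mathbf y_i^T\) with \(\mathbf v_i\in K\) and \(\mathbf y_i\in K^*\); such a decomposition has finitely many terms by definition of \(\operatorname{cone}\). Applying it to \(\mathbf x\in K\) gives \(\mathbf x=\sum_i(\mathbf y_i\cdot\mathbf x)\mathbf v_i\) with nonnegative coefficients. Hence \(K=\cone\{\mathbf v_i\}\), so \(K\) is polyhedral and pointed, and it is generated by its finitely many extreme rays \(\R_{\ge0}\mathbf r_1,\dots,\R_{\ge0}\mathbf r_k\), which are pairwise non-parallel.

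Next I write each \(\mathbf v_i\) as a nonnegative combination of the \(\mathbf r_j\) and regroup terms. This gives
\[
I_d=\sum_{j=1}^k\mathbf r_j\mathbf w_j^T,\qquad \mathbf w_j\in K^*,
\]
since \(K^*\) is a convex cone.

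Evaluating at \(\mathbf r_l\) gives \(\mathbf r_l=\sum_j(\mathbf w_j\cdot\mathbf r_l)\mathbf r_j\), where all coefficients are nonnegative. If some \(j\ne l\) had \(\mathbf w_j\cdot\mathbf r_l>0\), then \(\mathbf r_l=\mathbf a+\mathbf b\) with \(\mathbf a=(\mathbf w_j\cdot\mathbf r_l)\mathbf r_j\in K\) and \(\mathbf b\in K\). Here \(\mathbf a\) is not in \(\R_{\ge0}\mathbf r_l\), which contradicts extremality. Hence \(\mathbf r_l=(\mathbf w_l\cdot\mathbf r_l)\mathbf r_l\), so \(\mathbf w_j\cdot\mathbf r_l=\delta_{jl}\).

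This biorthogonality makes \(\mathbf r_1,\dots,\mathbf r_k\) linearly independent, so \(k\le d\). Since \(K-K=\R^d\), they also span, so \(k=d\). Therefore \(K\) is simplicial.

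The main point needing care is this extremality step. In particular, the rank-one terms must be regrouped along extreme rays before comparing coefficients. By contrast, the closedness part is routine once the functional \(T\) is chosen so that it is strictly positive on all nonzero generators.
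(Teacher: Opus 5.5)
Your proof is correct and follows essentially the same route as the paper. Closedness comes from the compact slice $\{T=1\}$ of the matrix cone cut out by $A\mapsto\boldsymbol\eta\cdot(A\mathbf e)$. Simpliciality comes from evaluating the rank-one decomposition of $I_d$ on extreme generators of $K$ to get a biorthogonal system, which gives linear independence, while $K-K=\R^d$ gives spanning. The only differences are cosmetic: you regroup the decomposition along extreme rays of the cone before invoking extremality, whereas the paper normalizes to the base $S$ and sums the vectors $\boldsymbol\theta_j$ attached to each extreme point of the polytope $S$; you also supply the Carath\'eodory justification for compactness of the convex hull, which the paper leaves implicit.
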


\begin{proof}
Choose \(\mathbf a\in\Int K\) and \(\boldsymbol\alpha\in\Int K^*\); the latter
exists because a closed convex pointed cone with nonempty interior has a dual
cone with nonempty interior.  The bases
\[
        S=\{\mathbf x\in K:\boldsymbol\alpha\cdot\mathbf x=1\},
        \qquad
        S^*=\{\mathbf y\in K^*:\mathbf y\cdot\mathbf a=1\}
\]
are compact.  They are closed.  If \(S\) were unbounded, a normalized unbounded
sequence would have a nonzero limit \(\mathbf u\in K\) with
\(\boldsymbol\alpha\cdot\mathbf u=0\), contradicting
\(\boldsymbol\alpha\in\Int K^*\).  Similarly, if \(S^*\) were unbounded, a
normalized unbounded sequence would have a nonzero limit \(\mathbf y\in K^*\)
with \(\mathbf y\cdot\mathbf a=0\), contradicting \(\mathbf a\in\Int K\).
Every nonzero generator \(\mathbf v\mathbf y^T\) of \(\mathcal R_K\) is a
positive scalar multiple of \(\mathbf x\boldsymbol\zeta^T\) with
\(\mathbf x\in S\) and \(\boldsymbol\zeta\in S^*\).  The set
\[
        C_0=\conv\{\mathbf x\boldsymbol\zeta^T:
        \mathbf x\in S,
        \boldsymbol\zeta\in S^*\}
\]
is compact and lies in the affine hyperplane
\[
        \{A\in \operatorname{End}(\R^d):
        \boldsymbol\alpha\cdot(A\mathbf a)=1\}.
\]
The cone over \(C_0\) is closed.  Indeed, if \(t_nA_n\to A\) with
\(t_n\ge0\) and \(A_n\in C_0\), then
\[
        t_n=\boldsymbol\alpha\cdot(t_nA_n\mathbf a)
        \longrightarrow \boldsymbol\alpha\cdot(A\mathbf a),
\]
so \((t_n)\) is bounded and a convergent subsequence argument applies.  This
closed cone is exactly \(\mathcal R_K\).

Suppose now that \(I_d\in\mathcal R_K\).  Write
\[
        I_d=\sum_{j=1}^m \mathbf v_j\mathbf y_j^T,
        \qquad
        \mathbf v_j\in K,
        \quad
        \mathbf y_j\in K^*.
\]
Discarding zero terms, set
\[
        \mathbf p_j=\frac{\mathbf v_j}{\boldsymbol\alpha\cdot\mathbf v_j},
        \qquad
        \boldsymbol\theta_j=(\boldsymbol\alpha\cdot\mathbf v_j)\mathbf y_j.
\]
For every \(\mathbf x\in S\),
\[
        \mathbf x=\sum_j(\boldsymbol\theta_j\cdot\mathbf x)\mathbf p_j,
        \qquad
        \boldsymbol\theta_j\cdot\mathbf x\ge0,
        \qquad
        \sum_j\boldsymbol\theta_j\cdot\mathbf x
        =\boldsymbol\alpha\cdot\mathbf x=1.
\]
Therefore \(S=\conv\{\mathbf p_1,\ldots,\mathbf p_m\}\).  Let \(E\) be the set
of extreme points of \(S\).  Then \(E\) is finite, and every point of \(E\) is
one of the \(\mathbf p_j\)'s.  If \(\mathbf q\in E\) and
\(\boldsymbol\theta_j\cdot\mathbf q>0\), the above convex representation of
\(\mathbf q\) forces \(\mathbf p_j=\mathbf q\).  For \(\mathbf q\in E\), define
\[
        \boldsymbol\beta_{\mathbf q}
        =\sum_{\{j:\mathbf p_j=\mathbf q\}}\boldsymbol\theta_j.
\]
Then for \(\mathbf q,\mathbf r\in E\),
\[
        \boldsymbol\beta_{\mathbf q}\cdot\mathbf r
        =
        \begin{cases}
        1,&\mathbf q=\mathbf r,\\
        0,&\mathbf q\ne\mathbf r.
        \end{cases}
\]
Thus the extreme points in \(E\) are linearly independent.  Since
\(K=\cone(S)=\cone(E)\) and \(K-K=\R^d\), the set \(E\) spans \(\R^d\).  Hence
\(E\) is a basis of \(\R^d\).  The cone \(K\) is therefore generated by the
linearly independent rays \(\R_{\ge0}\mathbf q\), \(\mathbf q\in E\).  By the
definition above, \(K\) is simplicial.
\end{proof}

\begin{lemma}[Cone-valued integration]\label{lem:cone-valued-integral}
Let \(C\) be a closed convex cone in a finite-dimensional real vector space
\(E\), let \(U\subseteq\R^n\) be measurable, and let
\(F:U\to E\) be integrable.  If \(F(\mathbf x)\in C\) for almost every \(\mathbf x\in U\), then
\[
        \int_U F(\mathbf x)\dd x\in C.
\]
\end{lemma}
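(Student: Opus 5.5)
The plan is a separation argument. Suppose the integral
\[
        \mathbf m=\int_U F(\mathbf x)\dd x
\]
is not in \(C\). Since \(C\) is a closed convex cone in the finite-dimensional space \(E\), the separation theorem for closed convex sets (for instance, via the bipolar theorem cited in Subsection~\ref{subsec:finite-dimensional-notation}) gives a linear functional \(\ell\) on \(E\) such that \(\ell\ge0\) on \(C\) and \(\ell(\mathbf m)<0\). Equivalently, \(C=C^{**}\), so membership in \(C\) is tested by the functionals in the dual cone \(C^*\).

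Next, use linearity of the Bochner (equivalently, coordinatewise Lebesgue) integral in finite dimension. Fix a basis of \(E\). Then integrability of \(F\) means that each coordinate function is Lebesgue integrable, and \(\int_U F\dd x\) is defined coordinatewise. Any linear functional \(\ell\) is a finite linear combination of coordinate functionals, so
\[
        \ell\Bigl(\int_U F\dd x\Bigr)=\int_U \ell(F(\mathbf x))\dd x .
\]
By hypothesis \(\ell(F(\mathbf x))\ge0\) for almost every \(\mathbf x\), so the right-hand side is nonnegative. This contradicts \(\ell(\mathbf m)<0\), and therefore \(\mathbf m\in C\).

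There is no real obstacle here. The only points to check are that the separating functional can be taken nonnegative on all of \(C\), which holds because \(C\) is a cone, and that the null set on which \(F\notin C\) does not affect the integral. If one prefers to avoid separation, an alternative is to approximate \(F\) in \(L^1\) by simple functions with values in \(C\) and use closedness of \(C\). Constructing those approximants takes more care, so the duality route is the one I would follow.
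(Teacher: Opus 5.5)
Your proposal is correct and follows essentially the same route as the paper: you separate the integral from the closed convex cone \(C\) by a functional that is nonnegative on \(C\) (using that \(C\) is a cone), and then use linearity of the finite-dimensional integral to get \(\ell\bigl(\int_U F\bigr)=\int_U \ell(F)\ge 0\), a contradiction. Your added remarks on coordinatewise integrability and on the null exceptional set are consistent with the paper's argument.
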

\begin{proof}
Put
\[
        a=\int_U F(\mathbf x)\dd x.
\]
Suppose that \(a\notin C\).  Since \(C\) is closed and convex, the separation
theorem gives a linear functional \(\ell\in E^*\) such that
\[
        \ell(a)<0
        \qquad\text{and}\qquad
        \ell(b)\ge0\quad(b\in C).
\]
Indeed, this follows from strong separation and the fact that \(C\) is a cone.
Since \(F(\mathbf x)\in C\) almost everywhere, we have
\[
        \ell(F(\mathbf x))\ge0
\]
almost everywhere.  By linearity of finite-dimensional vector-valued integration,
\[
        \ell(a)
        =
        \ell\left(\int_U F(\mathbf x)\dd x\right)
        =
        \int_U \ell(F(\mathbf x))\dd x
        \ge0,
\]
contradicting \(\ell(a)<0\).  Hence \(a\in C\).
\end{proof}

\subsection{Lipschitz epigraphs for upper sets}
For a proper cone \(K\subseteq\R^d\), fix once and for all
\(\mathbf e\in\Int K\) and \(\boldsymbol\eta\in\Int K^*\) such that
\(\boldsymbol\eta\cdot \mathbf e=1\).  Such a pair exists: choose
\(\mathbf e_0\in\Int K\).  Since \(K\) is proper, \(\Int K^*\ne\emptyset\).  If
\(\boldsymbol\eta_0\in\Int K^*\), then
\(\boldsymbol\eta_0\cdot \mathbf e_0>0\); set
\(\mathbf e=\mathbf e_0\) and
\(\boldsymbol\eta=\boldsymbol\eta_0/(\boldsymbol\eta_0\cdot \mathbf e_0)\).  Put
\[
        W=\{\mathbf z\in\R^d:\boldsymbol\eta\cdot \mathbf z=0\},
        \qquad
        \R^d=\R\mathbf e\oplus W.
\]
Thus every \(\mathbf x\in\R^d\) is uniquely written as
\[
        \mathbf x=t\mathbf e+\mathbf z,
        \qquad
        t=\boldsymbol\eta\cdot \mathbf x,
        \quad \mathbf z\in W.
\]
We choose the Lebesgue measure \(\dd z\) on \(W\) normalized so that, under the linear coordinates
\[
        \R\times W\longrightarrow\R^d,
        \qquad (t,\mathbf z)\longmapsto t\mathbf e+\mathbf z,
\]
the ambient Lebesgue measure satisfies
\[
        \dd x=\dd t\,\dd z.
\]
Equivalently, this absorbs the constant Jacobian of the above linear isomorphism into the normalization of \(\dd z\).  This rescaling of \(\dd z\) does not affect the definition of \(\nabla_W\), which is always computed using the Euclidean inner product inherited from \(\R^d\).

For later use, we define the modified transverse gradient.  Let \(V\subseteq W\) be open, and let \(f:V\to\R\) be differentiable at \(\mathbf z\in V\).  We identify \(\boldsymbol\eta\in K^*\) with its Euclidean representing vector.  Define
\[
        \nabla_{\mathbf e,W}f(\mathbf z)
        =\nabla_W f(\mathbf z)-\bigl(\nabla_W f(\mathbf z)\cdot \mathbf e\bigr)\boldsymbol\eta .
\]
Equivalently, if \(\mathbf h=\tau\mathbf e+\mathbf w\) with \(\mathbf w\in W\), then
\[
        \nabla_{\mathbf e,W}f(\mathbf z)\cdot \mathbf h
        =\nabla_W f(\mathbf z)\cdot \mathbf w .
\]
The vector \(\nabla_{\mathbf e,W}f(\mathbf z)\) is the ambient representative of the \(W\)-gradient of \(f\) adapted to the non-orthogonal splitting \(\mathbb R^d=\R\mathbf e\oplus W\); it agrees with \(\nabla_W f(\mathbf z)\) on \(W\)-directions and vanishes on the \(\mathbf e\)-direction.  Equivalently, for every \(\mathbf w\in W\),
\[
        \bigl(\boldsymbol\eta-\nabla_{\mathbf e,W}f(\mathbf z)\bigr)\cdot
        \bigl(Df_{\mathbf z}(\mathbf w)\mathbf e+\mathbf w\bigr)=0.
\]
Thus \(\boldsymbol\eta-\nabla_{\mathbf e,W}f(\mathbf z)\) is orthogonal to the tangent space of the graph \(\mathbf z\mapsto f(\mathbf z)\mathbf e+\mathbf z\).
When \(f\) is Lipschitz, this definition is used at points of differentiability, which exist almost everywhere by Rademacher's theorem.

Let \(a<b\), let \(U\subseteq W\) be open, and put
\[
        C=(a,b)\mathbf e+U.
\]
A set \(E\subseteq C\) is called \(K\)-upper in \(C\) if it is an upper set for the cone order \(\le_K\), relative to \(C\); equivalently,
\[
        \mathbf x\in E,
        \quad \mathbf y\in C,
        \quad \mathbf y-\mathbf x\in K
        \quad\Longrightarrow\quad
        \mathbf y\in E.
\]

\begin{lemma}[Upper sets are Lipschitz epigraphs]\label{lem:upper-lipschitz-epigraph}
Let \(U\subseteq W\) be open, let \(a<b\), and put
\[
        C=(a,b)\mathbf e+U.
\]
Let \(E\subseteq C\) be \(K\)-upper in \(C\).  Then \(E\) is Lebesgue measurable,
and there is a Lipschitz function \(f_E:U\to[a,b]\) such that, with
\[
        G_{f_E}=\{t\mathbf e+\mathbf z:\mathbf z\in U,\ f_E(\mathbf z)<t<b\},
\]
the symmetric difference \(E\triangle G_{f_E}\) has Lebesgue measure zero in
\(C\).
\end{lemma}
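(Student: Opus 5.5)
The plan is to define \(f_E\) fiberwise along the \(\mathbf e\)-direction and get the Lipschitz bound from the interior of \(K\).

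For \(\mathbf z\in U\), consider the vertical fiber \(\{t\in(a,b):t\mathbf e+\mathbf z\in E\}\). Since \(\mathbf e\in K\), if \(t\mathbf e+\mathbf z\in E\) and \(t<s<b\), then \((s-t)\mathbf e\in K\) and \(s\mathbf e+\mathbf z\in C\), so \(s\mathbf e+\mathbf z\in E\). Thus each fiber is an up-ray in \((a,b)\): either empty, \((c,b)\), or \([c,b)\). We set
\[
f_E(\mathbf z)=\inf\{t\in(a,b):t\mathbf e+\mathbf z\in E\},
\]
with the convention \(f_E(\mathbf z)=b\) when the fiber is empty. Then \(f_E\) takes values in \([a,b]\). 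Moreover, \(E\) and \(G_{f_E}\) differ on each fiber in at most the single point \(f_E(\mathbf z)\mathbf e+\mathbf z\). So \(E\triangle G_{f_E}\) is contained in the graph of \(f_E\).

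The key step is the Lipschitz estimate. Since \(\mathbf e\in\Int K\), there is \(r>0\) with \(\mathbf e+B(0,r)\subseteq K\), so \(\lambda\mathbf e+\mathbf w\in K\) whenever \(\|\mathbf w\|\le r\lambda\). Put \(L=1/r\) and take \(\mathbf z,\mathbf z'\in U\). Suppose \(t\mathbf e+\mathbf z\in E\) and \(s=t+L\|\mathbf z'-\mathbf z\|<b\). Then
\[
(s\mathbf e+\mathbf z')-(t\mathbf e+\mathbf z)=L\|\mathbf z'-\mathbf z\|\,\mathbf e+(\mathbf z'-\mathbf z)\in K,
\]
and \(s\mathbf e+\mathbf z'\in C\), so \(s\mathbf e+\mathbf z'\in E\). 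This gives \(f_E(\mathbf z')\le f_E(\mathbf z)+L\|\mathbf z'-\mathbf z\|\). The cases where a value equals \(b\) need separate handling: if \(f_E(\mathbf z)+L\|\mathbf z'-\mathbf z\|\ge b\), the inequality is trivial because \(f_E\le b\). Taking infima over admissible \(t\) and using the symmetry in \(\mathbf z,\mathbf z'\) shows that \(f_E\) is \(L\)-Lipschitz on \(U\).

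Measurability follows by sandwiching. \(G_{f_E}\) is open in \(C\) because \(f_E\) is continuous, hence it is Borel. The graph \(\Gamma=\{f_E(\mathbf z)\mathbf e+\mathbf z:\mathbf z\in U\}\) is closed in \(C\), and it is Lebesgue-null by Fubini in the coordinates \(\dd x=\dd t\,\dd z\), since each \(\mathbf z\)-slice is a single point. Since \(G_{f_E}\subseteq E\subseteq G_{f_E}\cup\Gamma\), the set \(E\) differs from a Borel set by a subset of a null set. By completeness of Lebesgue measure, \(E\) is measurable and \(E\triangle G_{f_E}\subseteq\Gamma\) has measure zero.

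The main obstacle is only bookkeeping. One must check that the comparison points stay inside \(C\), which holds because \(\mathbf z'\in U\) and \(s\in(a,b)\), and handle the empty-fiber value \(b\). Because \(C\) is a product in the \((t,\mathbf z)\) coordinates, no convexity of \(U\) is needed.
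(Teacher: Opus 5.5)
Your proposal is correct and follows essentially the same route as the paper: a fiberwise infimum along \(\mathbf e\), a Lipschitz bound coming from \(\mathbf e+B(0,r)\subseteq K\), and measurability by completeness, since \(E\triangle G_{f_E}\) lies in the null graph of \(f_E\). The only cosmetic difference is your choice \(L=1/r\): it needs the closedness of \(K\) to cover the boundary case \(\|\mathbf w\|=r\lambda\), whereas the paper takes \(L>1/\rho\) and so avoids that case.
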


\begin{proof}
For \(\mathbf z\in U\), define the vertical section
\[
        E_{\mathbf z}=\{t\in(a,b):t\mathbf e+\mathbf z\in E\}.
\]
Since \(\mathbf e\in K\) and \(E\) is \(K\)-upper in \(C\), each \(E_{\mathbf z}\) is an upper
subset of \((a,b)\).  Define
\[
        f_E(\mathbf z)=\inf E_{\mathbf z},
\]
with the convention that \(f_E(\mathbf z)=b\) if \(E_{\mathbf z}=\emptyset\).  Then
\(f_E(\mathbf z)\in[a,b]\), and \(E_{\mathbf z}\) differs from \((f_E(\mathbf z),b)\) by at most the
single point \(f_E(\mathbf z)\).  In particular, if
\[
        f_E(\mathbf z)<t<b,
\]
then \(t\in E_{\mathbf z}\), equivalently \(t\mathbf e+\mathbf z\in E\).

Since \(\mathbf e\in\Int K\), there is \(\rho>0\) such that
\[
        \mathbf e+B(0,\rho)\subseteq K.
\]
Choose \(L>1/\rho\).  Then, for every \(\mathbf w\in W\),
\[
        L\|\mathbf w\|\mathbf e+\mathbf w\in K.
\]
Indeed, if \(\mathbf w\ne0\), then
\[
        \left\|\frac{\mathbf w}{L\|\mathbf w\|}\right\|<\rho,
\]
so
\[
        \mathbf e+\frac{\mathbf w}{L\|\mathbf w\|}\in K,
\]
and multiplying by \(L\|\mathbf w\|>0\) gives the claim.  The case \(\mathbf w=0\) is immediate.

Let \(\mathbf z,\mathbf z'\in U\), and put \(\mathbf w=\mathbf z'-\mathbf z\).  Suppose first that
\[
        f_E(\mathbf z)+L\|\mathbf w\|<b.
\]
Choose \(t\) with
\[
        f_E(\mathbf z)<t<b-L\|\mathbf w\|.
\]
Then \(t\mathbf e+\mathbf z\in E\), and
\[
        (t+L\|\mathbf w\|)\mathbf e+\mathbf z'-(t\mathbf e+\mathbf z)
        =
        L\|\mathbf w\|\mathbf e+\mathbf w\in K.
\]
Moreover \((t+L\|\mathbf w\|)\mathbf e+\mathbf z'\in C\).  Since \(E\) is \(K\)-upper in \(C\),
\[
        (t+L\|\mathbf w\|)\mathbf e+\mathbf z'\in E.
\]
Hence
\[
        f_E(\mathbf z')\le t+L\|\mathbf w\|.
\]
Letting \(t\downarrow f_E(\mathbf z)\), we obtain
\[
        f_E(\mathbf z')\le f_E(\mathbf z)+L\|\mathbf z'-\mathbf z\|.
\]
If instead \(f_E(\mathbf z)+L\|\mathbf z'-\mathbf z\|\ge b\), the same inequality follows from the
trivial bound \(f_E(\mathbf z')\le b\).  Thus
\[
        f_E(\mathbf z')\le f_E(\mathbf z)+L\|\mathbf z'-\mathbf z\|.
\]
Interchanging \(\mathbf z\) and \(\mathbf z'\) gives
\[
        |f_E(\mathbf z')-f_E(\mathbf z)|\le L\|\mathbf z'-\mathbf z\|.
\]
Therefore \(f_E\) is Lipschitz.

For every \(\mathbf z\in U\),
\[
        E_{\mathbf z}\triangle(f_E(\mathbf z),b)\subseteq \{f_E(\mathbf z)\}.
\]
Consequently
\[
        E\triangle G_{f_E}
        \subseteq
        \{f_E(\mathbf z)\mathbf e+\mathbf z:\mathbf z\in U\}\cap C.
\]
The set on the right is the image, under the linear isomorphism
\((t,\mathbf z)\mapsto t\mathbf e+\mathbf z\), of the usual graph of the Lipschitz function \(f_E\).
Hence it has \(d\)-dimensional Lebesgue measure zero.  Since Lebesgue measure is
complete, every subset of this null graph is measurable and null.  Thus
\(E\triangle G_{f_E}\) is measurable and null.

Finally, \(f_E\) is continuous and \(U\) is open, so \(G_{f_E}\) is Borel.
Since
\[
        E=G_{f_E}\triangle(E\triangle G_{f_E}),
\]
the set \(E\) is Lebesgue measurable and differs from \(G_{f_E}\) by a null set.
\end{proof}

\begin{lemma}[Epigraph integration formula]\label{lem:epigraph-integration}
Let \(U\subseteq W\) be open, let \(a<b\), and put
\[
        C=(a,b)\mathbf e+U.
\]
Let \(f:U\to[a,b]\) be Lipschitz and put
\[
        G_f=\{t\mathbf e+\mathbf z:\mathbf z\in U,\ f(\mathbf z)<t<b\}.
\]
For \(\psi\in C_c^\infty(C)\), extended by zero outside \(C\), one has
\[
        -\int_{G_f} \nabla\psi(\mathbf x)\dd x
        =
        \int_U \psi(f(\mathbf z)\mathbf e+\mathbf z)\bigl(\boldsymbol\eta-\nabla_{\mathbf e,W}f(\mathbf z)\bigr)\dd z.
\]
On the null set where \(f\) is not differentiable, \(\nabla_{\mathbf e,W}f(\mathbf z)\) may be
chosen arbitrarily.  Moreover, if \(f=f_E\) is obtained from a \(K\)-upper set
\(E\subseteq C\) as in Lemma~\ref{lem:upper-lipschitz-epigraph}, then
\[
        \boldsymbol\eta-\nabla_{\mathbf e,W}f(\mathbf z)\in K^*
\]
for almost every \(\mathbf z\in U\) with \(f(\mathbf z)\in(a,b)\).
\end{lemma}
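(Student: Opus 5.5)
We prove the integration formula by passing to the coordinates \((t,\mathbf z)\) with \(\dd x=\dd t\,\dd z\) and using Fubini. We handle the \(\mathbf e\)-component and the \(W\)-components of the vector identity separately. Since both sides are vectors, it suffices to pair each side with an arbitrary \(\mathbf h=\tau\mathbf e+\mathbf w\), where \(\mathbf w\in W\). The left side then becomes
\[
-\int_{G_f}\nabla\psi\cdot\mathbf h\dd x,
\]
and the right side becomes
\[
\int_U\psi(f(\mathbf z)\mathbf e+\mathbf z)\bigl(\tau-\nabla_W f(\mathbf z)\cdot\mathbf w\bigr)\dd z,
\]
because \(\boldsymbol\eta\cdot\mathbf h=\tau\) and \(\nabla_{\mathbf e,W}f\cdot\mathbf h=\nabla_Wf\cdot\mathbf w\).

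For the \(\mathbf e\)-direction we use \(\nabla\psi\cdot\mathbf e=\partial_t\psi(t\mathbf e+\mathbf z)\). For fixed \(\mathbf z\), the inner integral \(\int_{f(\mathbf z)}^b\partial_t\psi\dd t\) equals \(-\psi(f(\mathbf z)\mathbf e+\mathbf z)\), since \(\psi\) vanishes near \(t=b\). This gives the \(\tau\)-term.

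For the \(W\)-directions the integration region moves with \(\mathbf z\), so we cannot integrate directly in \(\mathbf w\). Define
\[
\Phi(\mathbf z)=\int_{f(\mathbf z)}^b\psi(t\mathbf e+\mathbf z)\dd t .
\]
Since \(f\) is Lipschitz and \(\psi\) is smooth, \(\Phi\) is Lipschitz on \(U\). It also has compact support in \(U\), because \(\supp\psi\) is a compact subset of \(C\) and so its projection to \(W\) is compact in \(U\). At every point where \(f\) is differentiable, which is almost every point by Rademacher's theorem, the Leibniz rule gives
\[
D\Phi_{\mathbf z}(\mathbf w)=\int_{f(\mathbf z)}^b\nabla\psi\cdot\mathbf w\dd t-\psi(f(\mathbf z)\mathbf e+\mathbf z)\,\nabla_Wf(\mathbf z)\cdot\mathbf w .
\]
A compactly supported Lipschitz function satisfies \(\int_U D\Phi(\mathbf w)\dd z=0\). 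One way to see this is to apply Fubini along lines parallel to \(\mathbf w\) and use absolute continuity on each line; alternatively, mollify \(\Phi\). Combining this with Fubini on \(G_f\) yields the \(\mathbf w\)-component of the formula. This is the step needing the most care, namely justifying the Leibniz rule and the vanishing integral for a merely Lipschitz \(f\). If the pointwise argument becomes awkward, I would first mollify \(f\) to smooth \(f_k\to f\) uniformly with \(\nabla f_k\to\nabla f\) almost everywhere and boundedly, and then pass to the limit by dominated convergence.

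For the cone statement, fix a point \(\mathbf z\) where \(f=f_E\) is differentiable and \(a<f(\mathbf z)<b\). Take any \(\mathbf k=\tau\mathbf e+\mathbf w\in K\). We must show \(\tau\ge\nabla_Wf(\mathbf z)\cdot\mathbf w\). For small \(s>0\), the point \(x_0=(f(\mathbf z)+\epsilon)\mathbf e+\mathbf z\) lies in \(E\) for every \(\epsilon>0\), by Lemma~\ref{lem:upper-lipschitz-epigraph}. Since \(E\) is \(K\)-upper in \(C\) and \(x_0+s\mathbf k\in C\) for small \(s\), we get \(x_0+s\mathbf k\in E\). Hence
\[
f(\mathbf z+s\mathbf w)\le f(\mathbf z)+\epsilon+s\tau .
\]
Letting \(\epsilon\to0\), dividing by \(s\), and letting \(s\to0\) gives \(\nabla_Wf(\mathbf z)\cdot\mathbf w\le\tau\). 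This is exactly
\[
(\boldsymbol\eta-\nabla_{\mathbf e,W}f(\mathbf z))\cdot\mathbf k\ge0,
\]
so \(\boldsymbol\eta-\nabla_{\mathbf e,W}f(\mathbf z)\in K^*\).
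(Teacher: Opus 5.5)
Your proposal is correct and takes essentially the same route as the paper. You pair with \(\mathbf h=\tau\mathbf e+\mathbf w\), get the \(\tau\)-term from the fundamental theorem of calculus in \(t\), and get the \(W\)-term from the auxiliary Lipschitz function \(\Phi\) (the paper's \(H\)), the Leibniz rule at differentiability points of \(f\), and the vanishing integral of the derivative of a compactly supported Lipschitz function; the \(K^*\) statement uses the same upperness-plus-difference-quotient argument, and your only slip is that \(x_0=(f(\mathbf z)+\epsilon)\mathbf e+\mathbf z\in E\) needs \(\epsilon\) small enough that \(f(\mathbf z)+\epsilon<b\), which is harmless since you let \(\epsilon\to0\).
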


\begin{proof}
By Rademacher's theorem \cite[Section~3.1.2]{EvansGariepy2015}, \(f\) is
differentiable almost everywhere on \(W\).  We choose \(\nabla_{\mathbf e,W}f(\mathbf z)\)
arbitrarily on the null set where \(f\) is not differentiable.

Let \(\mathbf h\in\mathbb R^d\), and write it uniquely as
\[
        \mathbf h=\tau\mathbf e+\mathbf w,
        \qquad \tau\in\mathbb R,\quad \mathbf w\in W.
\]
Then \(\tau=\boldsymbol\eta\cdot \mathbf h\).  For fixed \(t\), write
\[
        \psi_t(\mathbf z)=\psi(t\mathbf e+\mathbf z),
\]
and let \(\nabla_W\psi_t(\mathbf z)\) denote its \(W\)-gradient.  Since
\[
        \partial_t\psi(t\mathbf e+\mathbf z)=\nabla\psi(t\mathbf e+\mathbf z)\cdot \mathbf e,
\]
we have
\[
        \nabla\psi(t\mathbf e+\mathbf z)\cdot \mathbf h
        =
        \tau\,\partial_t\psi(t\mathbf e+\mathbf z)
        +
        \nabla_W\psi_t(\mathbf z)\cdot \mathbf w.
\]
Using the coordinates \(\mathbf x=t\mathbf e+\mathbf z\), the normalization \(\dd x=\dd t\,\dd z\), and
Fubini's theorem, we get
\[
\begin{aligned}
 -\int_{G_f}\nabla\psi(\mathbf x)\cdot \mathbf h\dd x
 &=
 -\int_U\int_{f(\mathbf z)}^b
        \left(
        \tau\,\partial_t\psi(t\mathbf e+\mathbf z)
        +
        \nabla_W\psi_t(\mathbf z)\cdot \mathbf w
        \right)
        \dd t\,\dd z.
\end{aligned}
\]

The \(t\)-derivative term gives
\[
\begin{aligned}
-\tau\int_U\int_{f(\mathbf z)}^b\partial_t\psi(t\mathbf e+\mathbf z)\dd t\,\dd z
&=
-\tau\int_U
        \bigl(\psi(b\mathbf e+\mathbf z)-\psi(f(\mathbf z)\mathbf e+\mathbf z)\bigr)
        \dd z  \\
&=
\tau\int_U\psi(f(\mathbf z)\mathbf e+\mathbf z)\dd z,
\end{aligned}
\]
because \(\psi\) has zero trace at the fixed boundary level \(t=b\).

For the \(W\)-term, set
\[
        H(\mathbf z)=\int_{f(\mathbf z)}^b\psi(t\mathbf e+\mathbf z)\dd t.
\]
Then \(H\) is Lipschitz on \(U\).  Moreover, since \(\psi\) has compact support
in \(C\), the support of \(H\) is contained in a compact subset of \(U\).
Thus the zero extension of \(H\) to \(W\) is a compactly supported Lipschitz
function.  At almost every point where \(f\) is differentiable, the Leibniz
rule gives
\[
        \nabla_W H(\mathbf z)\cdot \mathbf w
        =
        \int_{f(\mathbf z)}^b \nabla_W\psi_t(\mathbf z)\cdot \mathbf w\dd t
        -
        \psi(f(\mathbf z)\mathbf e+\mathbf z)\nabla_W f(\mathbf z)\cdot \mathbf w.
\]
Since the zero extension of \(H\) is compactly supported and Lipschitz on \(W\),
\[
        \int_W \nabla_W H(\mathbf z)\cdot \mathbf w\dd z=0.
\]
Therefore
\[
        \int_U\int_{f(\mathbf z)}^b \nabla_W\psi_t(\mathbf z)\cdot \mathbf w\dd t\,\dd z
        =
        \int_U\psi(f(\mathbf z)\mathbf e+\mathbf z)\nabla_W f(\mathbf z)\cdot \mathbf w\dd z.
\]

Combining the \(t\)-term and the \(W\)-term, we obtain
\[
        -\int_{G_f}\nabla\psi(\mathbf x)\cdot \mathbf h\dd x
        =
        \int_U\psi(f(\mathbf z)\mathbf e+\mathbf z)
        \left(\tau-\nabla_W f(\mathbf z)\cdot \mathbf w\right)
        \dd z.
\]
By the defining property of \(\nabla_{\mathbf e,W}f(\mathbf z)\),
\[
        \bigl(\boldsymbol\eta-\nabla_{\mathbf e,W}f(\mathbf z)\bigr)\cdot \mathbf h
        =
        \tau-\nabla_W f(\mathbf z)\cdot \mathbf w.
\]
Hence
\[
        -\int_{G_f}\nabla\psi(\mathbf x)\cdot \mathbf h\dd x
        =
        \int_U\psi(f(\mathbf z)\mathbf e+\mathbf z)
        \bigl(\boldsymbol\eta-\nabla_{\mathbf e,W}f(\mathbf z)\bigr)\cdot \mathbf h\dd z.
\]
Since this holds for every \(\mathbf h\in\mathbb R^d\), the vector identity follows.

Now assume that \(f=f_E\) comes from a \(K\)-upper set \(E\subseteq C\).  Let
\(\mathbf z\in U\) be a point where \(f\) is differentiable and \(f(\mathbf z)\in(a,b)\).
Take \(\mathbf h=\tau\mathbf e+\mathbf w\in K\), with \(\mathbf w\in W\).  Since \(\boldsymbol\eta\in K^*\),
\[
        \tau=\boldsymbol\eta\cdot \mathbf h\ge0.
\]
For \(s>0\) sufficiently small, we have
\[
        \mathbf z+s\mathbf w\in U
        \qquad\text{and}\qquad
        f(\mathbf z)+s\tau<b.
\]
For every \(\delta>0\) with \(f(\mathbf z)+\delta+s\tau<b\), we have
\[
        (f(\mathbf z)+\delta)\mathbf e+\mathbf z\in E.
\]
Since \(s\mathbf h\in K\), upperness gives
\[
        (f(\mathbf z)+\delta+s\tau)\mathbf e+(\mathbf z+s\mathbf w)\in E.
\]
Therefore
\[
        f(\mathbf z+s\mathbf w)\le f(\mathbf z)+\delta+s\tau.
\]
Letting \(\delta\downarrow0\), we obtain
\[
        f(\mathbf z+s\mathbf w)-f(\mathbf z)\le s\tau.
\]
Dividing by \(s>0\) and then letting \(s\downarrow0\), we get
\[
        \nabla_W f(\mathbf z)\cdot \mathbf w\le \tau.
\]
Thus
\[
        \bigl(\boldsymbol\eta-\nabla_{\mathbf e,W}f(\mathbf z)\bigr)\cdot \mathbf h
        =
        \tau-\nabla_W f(\mathbf z)\cdot \mathbf w
        \ge0.
\]
Since \(\mathbf h\in K\) was arbitrary,
\[
        \boldsymbol\eta-\nabla_{\mathbf e,W}f(\mathbf z)\in K^*.
\]
This holds at every differentiability point \(\mathbf z\) with \(f(\mathbf z)\in(a,b)\), hence
almost everywhere on that set.
\end{proof}

\subsection{Finite-valued monotone maps}

Given an open set \(O\subseteq\R^d\), a measurable map \(Q:O\to\R^d\), and \(\psi\in C_c^\infty(O)\), define the matrix-valued flux
\[
        T_Q(\psi)=-\int_O Q(\mathbf x)\nabla\psi(\mathbf x)^T\dd x,
\]
whenever the integral is defined.  Here \(Q(\mathbf x)\nabla\psi(\mathbf x)^T\) is the rank-one matrix \(\mathbf h\mapsto Q(\mathbf x)(\nabla\psi(\mathbf x)\cdot \mathbf h)\).  In the lemma below \(O\) is the cylinder \(C\).

\begin{lemma}[Finite-valued monotone maps give matrix-cone fluxes]\label{lem:finite-step-flux}
Let \(\Omega\subseteq\R^d\) be open, let \(Q:\Omega\to\R^d\) be finite-valued
and \(K\)-monotone, and let
\[
        C=(a,b)\mathbf e+U
\]
be a cylinder with \(\overline C\subseteq\Omega\), where \(U\subseteq W\) is
bounded and open.  If \(0\le\psi\in C_c^\infty(C)\), then \(Q\) is measurable
on \(C\) and
\[
        T_Q(\psi)\in\mathcal R_K.
\]
\end{lemma}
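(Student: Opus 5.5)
Let the finitely many values of \(Q\) on \(C\) be \(\mathbf c_1,\ldots,\mathbf c_N\).  The plan is to write \(Q|_C\) as a telescoping sum of indicators of \(K\)-upper sets and then apply Lemma~\ref{lem:epigraph-integration} to each indicator.

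Fix \(\boldsymbol\lambda\in K^*\).  The scalar function \(\boldsymbol\lambda\cdot Q\) is monotone for \(\le_K\) and finite-valued on \(C\).  Let its distinct values be \(s_0<s_1<\cdots<s_k\), and put
\[
        E_j=\{\mathbf x\in C:\boldsymbol\lambda\cdot Q(\mathbf x)\ge s_j\},\qquad 1\le j\le k .
\]
Each \(E_j\) is \(K\)-upper in \(C\), so by Lemma~\ref{lem:upper-lipschitz-epigraph} it is measurable.  Since the sets \(\{\boldsymbol\lambda\cdot Q=s\}\) are finite Boolean combinations of the \(E_j\), each \(\boldsymbol\lambda\cdot Q\) is measurable.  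Because \(K^*\) has nonempty interior, it spans \(\R^d\), and hence every coordinate of \(Q\) is measurable on \(C\).

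For the flux, we would rather not choose one functional at a time.  Instead we use the values themselves.  Define
\[
        E_{\mathbf c}=\{\mathbf x\in C:Q(\mathbf x)\ge_K\mathbf c\},\qquad \mathbf c\in\{\mathbf c_1,\ldots,\mathbf c_N\}.
\]
Each \(E_{\mathbf c}\) is \(K\)-upper.  The difficulty is that \(\mathbf 1_{\{Q=\mathbf c\}}\) is not a nonnegative combination of such indicators unless the values form a chain.  The cleaner route is duality.

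By Lemma~\ref{lem:matrix-cone-obstruction}, \(\mathcal R_K\) is a closed convex cone.  By the bipolar theorem it therefore suffices to show \(\langle M,T_Q(\psi)\rangle\ge0\) for every \(M\) in the dual cone \(\mathcal R_K^*\), that is, for every matrix \(M\) with \(\mathbf y^TM\mathbf v\ge0\) for all \(\mathbf v\in K\) and \(\mathbf y\in K^*\).  For such \(M\) we would compute
\[
        \langle M,T_Q(\psi)\rangle=-\int_C Q(\mathbf x)^TM\nabla\psi(\mathbf x)\dd x .
\]
Now fix any value \(\mathbf c\) and write \(Q\) as a layer sum along a linear functional, as follows.

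Choose \(\boldsymbol\lambda\in\Int K^*\) such that the numbers \(\boldsymbol\lambda\cdot\mathbf c_i\) are pairwise distinct; this is possible after a generic perturbation.  Order the values so that \(\boldsymbol\lambda\cdot\mathbf c_{(0)}<\cdots<\boldsymbol\lambda\cdot\mathbf c_{(k)}\), and put \(E_j=\{Q\in\{\mathbf c_{(j)},\ldots,\mathbf c_{(k)}\}\}\).  This set is \(K\)-upper: if \(\mathbf x\le_K\mathbf y\), then \(Q(\mathbf x)\le_K Q(\mathbf y)\), so \(\boldsymbol\lambda\cdot Q(\mathbf x)\le\boldsymbol\lambda\cdot Q(\mathbf y)\).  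The layer identity then gives
\[
        Q=\mathbf c_{(0)}+\sum_{j\ge1}\bigl(\mathbf c_{(j)}-\mathbf c_{(j-1)}\bigr)\mathbf 1_{E_j}.
\]
The constant term contributes \(\int\nabla\psi=0\).  Lemma~\ref{lem:epigraph-integration} turns each term into
\[
        \bigl(\mathbf c_{(j)}-\mathbf c_{(j-1)}\bigr)\int_U\psi\,\bigl(\boldsymbol\eta-\nabla_{\mathbf e,W}f_j\bigr)^T\dd z,
\]
where \(f_j=f_{E_j}\).  The factor \(\boldsymbol\eta-\nabla_{\mathbf e,W}f_j\) lies in \(K^*\) almost everywhere where \(f_j\in(a,b)\).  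Where \(f_j\in\{a,b\}\), the point \(f_j(\mathbf z)\mathbf e+\mathbf z\) lies outside \(C\), so \(\psi\) vanishes there.

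The main obstacle is that the increments \(\mathbf c_{(j)}-\mathbf c_{(j-1)}\) need not lie in \(K\), since the values need not be totally ordered.  The fix we would try is to pair the differences with the geometry pointwise.  Along the graph of \(f_j\), the value of \(Q\) jumps from some \(\mathbf c_{(i)}\) with \(i<j\) to some \(\mathbf c_{(l)}\) with \(l\ge j\).  Summing over \(j\) and regrouping, the flux should equal an integral over the interfaces \(\partial\{Q=\mathbf c_{(i)}\}\cap\partial\{Q=\mathbf c_{(l)}\}\) of
\[
        \bigl(\mathbf c_{(l)}-\mathbf c_{(i)}\bigr)\bigl(\boldsymbol\eta-\nabla_{\mathbf e,W}f\bigr)^T .
\]
On such an interface, monotonicity forces \(\mathbf c_{(i)}\le_K\mathbf c_{(l)}\).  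Indeed, near a differentiability point the lower side is reached from the upper side by moving in the direction \(-\mathbf e\in-\Int K\).  This yields generators \(\mathbf v\mathbf y^T\) with \(\mathbf v\in K\) and \(\mathbf y\in K^*\), and Lemma~\ref{lem:cone-valued-integral} applied to \(\mathcal R_K\) concludes.

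Making this interface regrouping rigorous is the crux.  It means handling the measure-zero ambiguity and multiple interfaces over the same \(\mathbf z\).  We would try to do this through the fibres: for a.e. \(\mathbf z\), the map \(t\mapsto Q(t\mathbf e+\mathbf z)\) is a \(K\)-increasing step function in \(t\).  We would perform the \(t\)-integration by parts fibrewise, getting jumps in \(K\) at each level set, exactly as in the proof of Lemma~\ref{lem:epigraph-integration}.
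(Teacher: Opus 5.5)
You set things up exactly as the paper does: a generic \(\boldsymbol\lambda\in\Int K^*\) separating the values, the nested \(K\)-upper sets \(E_j\), the layer formula, and Lemma~\ref{lem:epigraph-integration} applied to each \(\one_{E_j}\). Your measurability argument via all \(\boldsymbol\lambda\in K^*\) is a valid alternative, and the \(\mathcal R_K^*\)-duality detour is never used. The trouble is that the step you call the crux is left open, and the idea that closes it is missing. At fixed \(\mathbf z\) the integrand is \(\sum_j\psi(f_j(\mathbf z)\mathbf e+\mathbf z)(\mathbf c_{(j)}-\mathbf c_{(j-1)})\bigl(\boldsymbol\eta-\nabla_{\mathbf e,W}f_j(\mathbf z)\bigr)^T\), and the individual increments need not lie in \(K\). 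Grouping the indices \(r\le j\le s\) that share a level \(f_r(\mathbf z)=\cdots=f_s(\mathbf z)=t\) helps only if the covectors \(\boldsymbol\eta-\nabla_{\mathbf e,W}f_j(\mathbf z)\) coincide across the block. Otherwise the block sum is not a rank-one matrix \((\mathbf c_{(s)}-\mathbf c_{(r-1)})\mathbf y^T\), and there is no reason for it to lie in \(\mathcal R_K\). Your interface formula, written with a single \(f\), silently assumes this coincidence.

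The missing observation is the following. The \(E_j\) are nested, so \(f_1\le f_2\le\cdots\). Hence \(f_{j+1}-f_j\ge0\) attains its minimum \(0\) at \(\mathbf z\). At a point where all \(f_j\) are differentiable (almost every \(\mathbf z\), by Rademacher), this forces \(\nabla_W f_j(\mathbf z)=\nabla_W f_{j+1}(\mathbf z)\). The block then telescopes to \(\psi(t\mathbf e+\mathbf z)(\mathbf c_{(s)}-\mathbf c_{(r-1)})\bigl(\boldsymbol\eta-\nabla_{\mathbf e,W}f_r(\mathbf z)\bigr)^T\).

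Your fibrewise fallback does not avoid this issue. Integration by parts in \(t\) only computes \(T_Q(\psi)\mathbf e\). The \(W\)-directions still require the Leibniz/divergence step of Lemma~\ref{lem:epigraph-integration}, and that step reintroduces the separate gradients \(\nabla_W f_j\).

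For the jump itself, differentiability is not the relevant mechanism. What you need is that the sections in Lemma~\ref{lem:upper-lipschitz-epigraph} are exact away from the threshold point. Choose \(t_-<t<t_+\) strictly between the neighbouring levels \(f_{r-1}(\mathbf z)\) and \(f_{s+1}(\mathbf z)\), and inside \((a,b)\); this is possible whenever \(\psi(t\mathbf e+\mathbf z)\ne0\). Distinctness of the \(\boldsymbol\lambda\)-values then gives exactly \(Q(t_-\mathbf e+\mathbf z)=\mathbf c_{(r-1)}\) and \(Q(t_+\mathbf e+\mathbf z)=\mathbf c_{(s)}\). \(K\)-monotonicity along \((t_+-t_-)\mathbf e\in K\) then puts \(\mathbf c_{(s)}-\mathbf c_{(r-1)}\) in \(K\). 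With these two ingredients the integrand lies in \(\mathcal R_K\) almost everywhere, and Lemma~\ref{lem:cone-valued-integral} concludes; this is precisely the paper's argument.
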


\begin{proof}
Let \(\mathbf q_1,\ldots,\mathbf q_N\) be the distinct values of \(Q\) on \(\Omega\).  If
\(N=1\), then \(T_Q(\psi)=0\in\mathcal R_K\).  Assume \(N\ge2\).  Choose
\(\boldsymbol\lambda\in\Int K^*\) such that the numbers \(\boldsymbol\lambda\cdot \mathbf q_i\) are pairwise
distinct.  For \(\mathbf q_i\ne \mathbf q_j\), the condition \(\boldsymbol\lambda\cdot \mathbf q_i=\boldsymbol\lambda\cdot \mathbf q_j\)
defines a proper hyperplane in \(\R^d\).  Since \(\Int K^*\) is open and nonempty, it is not covered by finitely many such hyperplanes.  Relabel the values so that
\[
        \boldsymbol\lambda\cdot \mathbf q_1<\boldsymbol\lambda\cdot \mathbf q_2<\cdots<\boldsymbol\lambda\cdot \mathbf q_N.
\]
Choose numbers \(s_i\) with
\[
        \boldsymbol\lambda\cdot \mathbf q_i<s_i<\boldsymbol\lambda\cdot \mathbf q_{i+1},
        \qquad 1\le i<N,
\]
and define
\[
        E_i=\{\mathbf x\in\Omega:\boldsymbol\lambda\cdot Q(\mathbf x)>s_i\}.
\]
Each \(E_i\) is \(K\)-upper in \(\Omega\), because \(Q\) is \(K\)-monotone and
\(\boldsymbol\lambda\in K^*\).  Hence \(E_i\cap C\) is \(K\)-upper in \(C\).  By
Lemma~\ref{lem:upper-lipschitz-epigraph}, take the threshold functions
\(f_i=f_{E_i\cap C}:U\to[a,b]\).  Then \(E_i\cap C\) differs from
\[
        G_i=\{t\mathbf e+\mathbf z:\mathbf z\in U,\ f_i(\mathbf z)<t<b\}
\]
by a null set, and for every fixed \(\mathbf z\in U\) and every \(t\in(a,b)\) with \(t\ne f_i(\mathbf z)\), membership is exact:
\[
        t\mathbf e+\mathbf z\in E_i\quad\Longleftrightarrow\quad t>f_i(\mathbf z).
\]
Since
\[
        E_1\supseteq E_2\supseteq\cdots\supseteq E_{N-1},
\]
the threshold functions satisfy
\[
        f_1\le f_2\le\cdots\le f_{N-1}.
\]
In particular, the sets \(E_i\cap C\) are measurable, the phase sets of \(Q\) in \(C\) are measurable, and the layer formula a.e. on \(C\) is
\[
        Q=\mathbf q_1+\sum_{i=1}^{N-1}(\mathbf q_{i+1}-\mathbf q_i)\one_{E_i}.
\]
The constant term gives no contribution because \(\int_C \nabla\psi(\mathbf x)\dd x=0\).  By Lemma~\ref{lem:epigraph-integration},
\[
        T_Q(\psi)
        =\int_U
        \sum_{i=1}^{N-1}
        \psi(f_i(\mathbf z)\mathbf e+\mathbf z)(\mathbf q_{i+1}-\mathbf q_i)
        \bigl(\boldsymbol\eta-\nabla_{\mathbf e,W}f_i(\mathbf z)\bigr)^T
        \dd z.
\]
It remains to show that the integrand belongs to \(\mathcal R_K\) for almost
every \(\mathbf z\).

Fix a point \(\mathbf z\) at which all \(f_i\)'s are differentiable.  Partition
\(\{1,\ldots,N-1\}\) into maximal consecutive blocks on which the values
\(f_i(\mathbf z)\) are equal.  Let \(r,\ldots,s\) be such a block and write
\[
        f_r(\mathbf z)=\cdots=f_s(\mathbf z)=t.
\]
If \(\psi(t\mathbf e+\mathbf z)=0\), the block contributes zero.  Otherwise \(t\in(a,b)\).  For
\(r\le i<s\), the function \(f_{i+1}-f_i\) is nonnegative and has value \(0\)
at \(\mathbf z\); since both functions are differentiable at \(\mathbf z\), this forces
\[
        \nabla_{\mathbf e,W}f_r(\mathbf z)=\nabla_{\mathbf e,W}f_{r+1}(\mathbf z)=\cdots=\nabla_{\mathbf e,W}f_s(\mathbf z).
\]
Thus the block contribution is
\[
        \psi(t\mathbf e+\mathbf z)(\mathbf q_{s+1}-\mathbf q_r)
        \bigl(\boldsymbol\eta-\nabla_{\mathbf e,W}f_r(\mathbf z)\bigr)^T.
\]
By Lemma~\ref{lem:epigraph-integration}, \(\boldsymbol\eta-\nabla_{\mathbf e,W}f_r(\mathbf z)\in K^*\).  We claim
that \(\mathbf q_{s+1}-\mathbf q_r\in K\).  By maximality of the block, choose numbers
\(t_-<t<t_+\) so close to \(t\) that
\[
        t_- > f_{r-1}(\mathbf z)\quad(r>1),
        \qquad
        t_+ < f_{s+1}(\mathbf z)\quad(s<N-1),
\]
with the evident omissions when \(r=1\) or \(s=N-1\).  The choices of \(t_-\) and \(t_+\) avoid all threshold levels \(f_i(\mathbf z)\).  By the pointwise section statement in Lemma~\ref{lem:upper-lipschitz-epigraph}, the membership pattern in the sets \(E_i\) is exact at these two points, not merely almost everywhere, and
\[
        Q(t_-\mathbf e+\mathbf z)=\mathbf q_r,
        \qquad
        Q(t_+\mathbf e+\mathbf z)=\mathbf q_{s+1}.
\]
Since \((t_+-t_-)\mathbf e\in K\) and \(Q\) is \(K\)-monotone, it follows that
\[
        \mathbf q_{s+1}-\mathbf q_r\in K.
\]
Therefore each nonzero block contribution is a nonnegative scalar multiple of
a generator of \(\mathcal R_K\).  The whole integrand lies in \(\mathcal R_K\)
for almost every \(\mathbf z\), and Lemma~\ref{lem:cone-valued-integral} gives
\(T_Q(\psi)\in\mathcal R_K\).
\end{proof}

\begin{proposition}[Euclidean finite-step obstruction]\label{prop:euclidean-obstruction}
Let \(K\subseteq\R^d\) be a closed convex pointed cone with nonempty interior.
If \(K\) is not simplicial, then, for every nonempty open set
\(\Omega\subseteq\R^d\), there is no sequence of finite-valued \(K\)-monotone
maps
\[
        Q_n:\Omega\to\R^d
\]
which converges to the identity uniformly on compact subsets of \(\Omega\).
\end{proposition}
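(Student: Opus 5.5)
We argue by contradiction and reduce the statement to Lemma~\ref{lem:matrix-cone-obstruction}. Suppose finite-valued \(K\)-monotone maps \(Q_n:\Omega\to\R^d\) converge to the identity uniformly on compact subsets of \(\Omega\). Since \(\Omega\) is open and nonempty, we fix a point \(\mathbf x_0\in\Omega\), a bounded open \(U\subseteq W\) and \(a<b\) such that the cylinder \(C=(a,b)\mathbf e+U+\mathbf x_0\) satisfies \(\overline C\subseteq\Omega\). The cylinders in Lemmas~\ref{lem:upper-lipschitz-epigraph}--\ref{lem:finite-step-flux} are centred at the origin, but everything there is translation invariant, so translating by \(\mathbf x_0\) is harmless. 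We also fix a nonzero \(0\le\psi\in C_c^\infty(C)\).

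By Lemma~\ref{lem:finite-step-flux}, each \(Q_n\) is measurable on \(C\) and
\[
        T_{Q_n}(\psi)=-\int_C Q_n(\mathbf x)\nabla\psi(\mathbf x)^T\dd x\in\mathcal R_K .
\]
Because \(\overline C\) is compact in \(\Omega\), \(Q_n\to\id\) uniformly on \(C\). Since \(\nabla\psi\) is bounded with compact support, dominated convergence (or a direct sup-norm estimate) gives
\[
        T_{Q_n}(\psi)\longrightarrow T_{\id}(\psi)=-\int_C \mathbf x\,\nabla\psi(\mathbf x)^T\dd x .
\]
Integration by parts, applied componentwise, gives \(-\int_C x_i\,\partial_j\psi\dd x=\delta_{ij}\int_C\psi\dd x\). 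Hence
\[
        T_{\id}(\psi)=\Bigl(\int_C\psi\dd x\Bigr)I_d,
\]
a strictly positive multiple of \(I_d\).

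By Lemma~\ref{lem:matrix-cone-obstruction}, \(\mathcal R_K\) is closed, so this limit lies in \(\mathcal R_K\). Because \(\mathcal R_K\) is a cone, it follows that \(I_d\in\mathcal R_K\). The second part of Lemma~\ref{lem:matrix-cone-obstruction} then says \(K\) is simplicial, which contradicts the hypothesis. Note that \(K\) is proper here: it is closed, convex, pointed and has nonempty interior, so the lemma applies.

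The real difficulty, proving that fluxes of finite-valued monotone maps lie in \(\mathcal R_K\), is already handled by Lemma~\ref{lem:finite-step-flux}. What remains is to check the hypotheses when applying it: \(Q_n\) must be defined and finite-valued on all of \(\Omega\), as the lemma requires, and the cylinder must be taken with compact closure inside \(\Omega\). The limit passage also needs to be justified, which is straightforward because the convergence is uniform on a set of finite measure.
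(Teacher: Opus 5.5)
Your proof is correct and follows the paper's argument step by step. Lemma~\ref{lem:finite-step-flux} puts each flux $T_{Q_n}(\psi)$ in $\mathcal R_K$; uniform convergence and closedness of $\mathcal R_K$ pass this to $T_{\id}(\psi)$; integration by parts identifies $T_{\id}(\psi)$ as a positive multiple of $I_d$, which contradicts Lemma~\ref{lem:matrix-cone-obstruction}.

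The differences are cosmetic. The paper normalizes $\int_C\psi=1$ rather than rescaling inside the cone. Your translation by $\mathbf x_0$ is unnecessary, since a cylinder $(a,b)\mathbf e+U$ with $U\subseteq W$ open can already be placed around any point of $\R^d=\R\mathbf e\oplus W$.
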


\begin{proof}
Suppose, for a contradiction, that such a sequence \((Q_n)\) exists.

By Lemma~\ref{lem:matrix-cone-obstruction},
\[
        I_d\notin\mathcal R_K.
\]
Since \(\Omega\) is nonempty and open, we may choose a cylinder
\[
        C=(a,b)\mathbf e+U\Subset\Omega,
\]
where \(U\subseteq W\) is bounded and open.  Choose
\[
        0\le\psi\in C_c^\infty(C)
\]
with
\[
        \int_C\psi(\mathbf x)\dd x=1.
\]
By Lemma~\ref{lem:finite-step-flux},
\[
        -\int_C Q_n(\mathbf x)\nabla\psi(\mathbf x)^T\dd x\in\mathcal R_K
\]
for every \(n\).  Since \(Q_n\to\id\) uniformly on \(\supp\psi\),
\[
\begin{aligned}
\left\|
        \int_C (Q_n(\mathbf x)-\mathbf x)\nabla\psi(\mathbf x)^T\dd x
\right\|
&\le
        \sup_{\mathbf x\in\supp\psi}\|Q_n(\mathbf x)-\mathbf x\|
        \int_C\|\nabla\psi(\mathbf x)\|\dd x
        \longrightarrow0 .
\end{aligned}
\]
Hence
\[
        -\int_C Q_n(\mathbf x)\nabla\psi(\mathbf x)^T\dd x
        \longrightarrow
        -\int_C \mathbf x\nabla\psi(\mathbf x)^T\dd x.
\]
Since \(\mathcal R_K\) is closed,
\[
        -\int_C \mathbf x\nabla\psi(\mathbf x)^T\dd x\in\mathcal R_K.
\]
For every \(\mathbf h\in\R^d\), coordinatewise integration by parts gives
\[
\begin{aligned}
        \left(-\int_C \mathbf x\nabla\psi(\mathbf x)^T\dd x\right)\mathbf h
        &=-\int_C \mathbf x\,\bigl(\nabla\psi(\mathbf x)\cdot \mathbf h\bigr)\dd x \\
        &=\mathbf h\int_C\psi(\mathbf x)\dd x
        =\mathbf h .
\end{aligned}
\]
There is no boundary term because \(\psi\) has compact support in \(C\).  Thus
\[
        -\int_C \mathbf x\nabla\psi(\mathbf x)^T\dd x=I_d,
\]
contradicting \(I_d\notin\mathcal R_K\).
\end{proof}

\section{Proof of the classification}\label{sec:classification}

\subsection{The Hasse-cycle obstruction}

\begin{theorem}[Hasse-cycle obstruction]\label{thm:cycle-obstruction}
Let \(P\) be a finite poset with a least element.  If the undirected Hasse graph of \(P\) contains a cycle, then \(\Vone(P)\) does not admit a directed finite-deflation approximation of the identity.  In particular, \(\Vone(P)\) is not an RB-domain.
\end{theorem}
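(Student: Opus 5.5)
We argue by contradiction and combine the results of Sections~\ref{sec:hasse} and~\ref{sec:euclidean}. Suppose \(\Vone(P)\) admits a directed family of finite-range deflations with pointwise supremum \(\id\). By Corollary~\ref{cor:least-connected}, the undirected Hasse graph of \(P\) is connected. Lemma~\ref{lem:hasse-flow} therefore applies: \(K_P\) is a closed, pointed, polyhedral cone with \(K_P-K_P=H_P\), so it is proper inside \(H_P\). Since the graph contains a cycle, Proposition~\ref{prop:cycle-nonsimplicial} says that \(K_P\) is not simplicial. A cycle needs at least three vertices, so \(n=|P|\ge3\) and \(d=\dim H_P=n-1\ge2\). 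In particular \(\relint(\Delta_P)\) is a nonempty open subset of an affine space of dimension \(d\).

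Next we transport everything to a Euclidean setting. Fix a point \(c\in\relint(\Delta_P)\) and a linear isometry \(\Phi:H_P\to\R^d\), using the inner product inherited from \(\R^P\). Put \(K=\Phi(K_P)\). Then \(K\) is a closed convex pointed cone with nonempty interior in \(\R^d\), and it is not simplicial, because simpliciality (generation by \(d\) linearly independent rays) is preserved by linear isomorphisms. Consider the affine chart \(\iota(p)=\Phi(p-c)\), which sends \(\relint(\Delta_P)\) onto an open set \(\Omega_0\subseteq\R^d\). This chart converts the stochastic order into \(\le_K\), since \(p\le q\) iff \(q-p\in K_P\).

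Now choose a closed ball \(L\Subset\relint(\Delta_P)\) inside the affine hull. It is compact and convex with nonempty relative interior. Proposition~\ref{prop:RB-local-approximants} then provides finite-valued \(K_P\)-monotone maps \(Q_n:L\to\Delta_P\) with \(\sup_{q\in L}\|Q_n(q)-q\|\to0\). Let \(\Omega=\iota(\Int_{\mathrm{rel}} L)\), the image of the relative interior of \(L\); this is a nonempty open ball in \(\R^d\). Define \(\widetilde Q_n=\iota\circ Q_n\circ\iota^{-1}\) on \(\Omega\). These maps take values in \(\R^d\) and have the following properties:
\begin{itemize}
\item they are finite-valued;
\item they are \(K\)-monotone, because \(\Phi\) is linear and maps \(K_P\) onto \(K\);
\item they converge uniformly to the identity on all of \(\Omega\), because \(\Phi\) is an isometry. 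A fortiori they converge uniformly on compact subsets of \(\Omega\).
\end{itemize}
This contradicts Proposition~\ref{prop:euclidean-obstruction}. Hence \(\Vone(P)\) admits no directed finite-deflation approximation of the identity.

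The ``in particular'' clause is immediate. By the internal characterization recalled in Section~\ref{sec:valuations}, an RB-domain carries exactly such a directed family of deflations with supremum \(\id\).

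The only delicate point is the bookkeeping in the transfer. One must check that \(K\) is full-dimensional in \(\R^d\), which holds because \(K_P-K_P=H_P\). One must also check that the values of \(Q_n\) lie in \(\Delta_P\), so that \(\iota\) is defined on them; this holds since \(\Delta_P\) sits in the affine hull \(c+H_P\) on which \(\iota\) is defined. With these checks in place, Proposition~\ref{prop:euclidean-obstruction} applies verbatim, so I expect no step to be a genuine obstacle.
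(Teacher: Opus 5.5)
Your proposal is correct and follows essentially the same route as the paper. Both arguments get connectivity from Corollary~\ref{cor:least-connected} and non-simpliciality from Proposition~\ref{prop:cycle-nonsimplicial}, take finite-valued \(K_P\)-monotone approximants on a compact ball from Proposition~\ref{prop:RB-local-approximants}, and pass through an isometric affine chart of \(H_P\) to contradict Proposition~\ref{prop:euclidean-obstruction}. The paper translates by \(p_*\in\relint(L)\) and uses orthonormal coordinates, while you make the transfer checks explicit (full-dimensionality of \(K=\Phi(K_P)\), invariance of non-simpliciality, preservation of uniform convergence); these are exactly the details the paper leaves implicit.
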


\begin{proof}
By Corollary \ref{cor:least-connected}, the Hasse graph of \(P\) is connected.  Choose \(L\Subset\relint(\Delta_P)\) compact, convex, and with nonempty interior in the affine hull of \(\Delta_P\); for instance, take a sufficiently small closed ball in that affine hyperplane.  If \(\Vone(P)\) admitted a directed finite-deflation approximation of the identity, Proposition \ref{prop:RB-local-approximants} would give finite-valued \(K_P\)-monotone maps \(Q_n:L\to\Delta_P\) converging uniformly to the identity on \(L\).

Choose \(p_*\in\relint(L)\).  The set \(\Omega=\relint(L)-p_*\subseteq H_P\) is a nonempty open convex subset of \(H_P\).  In orthonormal coordinates on \(H_P\), the translated maps
\[
        \widehat Q_n(\mathbf x)=Q_n(p_*+\mathbf x)-p_*
\]
are finite-valued \(K_P\)-monotone maps and converge to the identity uniformly on compact subsets of \(\Omega\).  By Proposition \ref{prop:cycle-nonsimplicial}, the cone \(K_P\) is non-simplicial.  This contradicts Proposition \ref{prop:euclidean-obstruction}.
\end{proof}

\subsection{The positive tree case and the classification}

We use Goubault-Larrecq's  finite-tree result: for every finite tree domain \(T\), the domain \(\Vone(T)\) is a countably-based bc-domain \cite[Lemma~6.8]{GoubaultLarrecq2012}.  Here a bc-domain is a bounded-complete continuous dcpo; we use the standard fact that countably-based bc-domains are RB-domains \cite{Gierz2003}.

\begin{theorem}[Classification for finite posets]\label{thm:finite-poset-classification}
Let \(P\) be a finite nonempty poset.  Then
\[
        \Vone(P)\text{ is an RB-domain}
        \quad\Longleftrightarrow\quad
        P\text{ has a least element and the undirected Hasse graph of }P\text{ is a tree}.
\]
Equivalently, by Lemma~\ref{lem:finite-tree-equivalence}, \(\Vone(P)\) is an RB-domain exactly for finite tree domains.
\end{theorem}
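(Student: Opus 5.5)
The proof splits into the two implications, and nearly all of the work has already been done in the earlier sections.

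\emph{Necessity.} Assume \(\Vone(P)\) is an RB-domain. By the internal characterization recalled in Section~\ref{sec:valuations}, \(\Vone(P)\) carries a directed family of deflations with pointwise supremum \(\id\); in particular it admits at least one finite-range deflation \(r\le\id\). Proposition~\ref{prop:least-element-obstruction} then shows that \(P\) has a least element. Corollary~\ref{cor:least-connected} gives that the undirected Hasse graph is connected. If this graph contained a cycle, Theorem~\ref{thm:cycle-obstruction} would rule out any directed finite-deflation approximation of the identity, which is a contradiction. Hence the graph is connected and acyclic, that is, a tree.

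\emph{Sufficiency.} Assume \(P\) has a least element and its undirected Hasse graph is a tree. By Lemma~\ref{lem:finite-tree-equivalence}, \(P\) is then a finite tree domain in the Jung--Tix sense: it is pointed and every principal ideal \(\downarrow x\) is a chain. Goubault-Larrecq's Lemma~6.8, quoted just before the theorem, says that \(\Vone(P)\) is a countably-based bc-domain. The standard fact that countably-based bc-domains are RB-domains \cite{Gierz2003} then gives the conclusion. One could also appeal directly to the Jung--Tix finite-tree theorem \cite[Theorem~13]{JungTix1998}.

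We also need \(\Vone(P)\) to be pointed. This follows because the Dirac measure \(\delta_\bot\) at the least element is the least element of \(\Vone(P)\): every nonempty upper set contains \(\bot\) only if it equals \(P\), so \(\delta_\bot(U)=0\le q(U)\) for every proper upper set \(U\) and every \(q\in\Delta_P\).

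The final sentence of the theorem is just a restatement of the main equivalence through Lemma~\ref{lem:finite-tree-equivalence}.

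The only step that needs real care is matching definitions in the sufficiency direction. We must check that "finite tree domain" as used in the cited positive results agrees with condition (ii) of Lemma~\ref{lem:finite-tree-equivalence}. We must also check that the RB notion delivered by the bc-domain route is the same as the finite-deflation characterization used in the necessity direction. Both reduce to citing \cite[Theorem~4.1]{Jung1989}, so I expect no genuine obstacle.
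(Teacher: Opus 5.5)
Your proposal is correct and follows the paper's proof essentially step for step. Necessity goes through Proposition~\ref{prop:least-element-obstruction} and Theorem~\ref{thm:cycle-obstruction}, and sufficiency goes through Lemma~\ref{lem:finite-tree-equivalence}, Goubault-Larrecq's Lemma~6.8, and the fact that countably-based bc-domains are RB-domains; your extra check that \(\delta_\bot\) is least in \(\Vone(P)\) is correct but redundant, since bc-domains are automatically pointed.
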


\begin{proof}
First suppose that \(P\) has a least element and that its undirected Hasse graph is a tree.  By Lemma~\ref{lem:finite-tree-equivalence}, \(P\) is a finite tree domain.  Goubault-Larrecq proves that, for every finite tree domain \(T\), \(\Vone(T)\) is a countably-based bc-domain \cite[Lemma~6.8]{GoubaultLarrecq2012}.  Since countably-based bc-domains are RB-domains, \(\Vone(P)\) is an RB-domain.

Conversely, suppose \(\Vone(P)\) is an RB-domain.  Proposition \ref{prop:least-element-obstruction} implies that \(P\) has a least element.  If the Hasse graph had a cycle, Theorem~\ref{thm:cycle-obstruction} would give a contradiction.  Hence the Hasse graph is connected and acyclic, so it is a tree.  The equivalence with finite tree domains follows from Lemma~\ref{lem:finite-tree-equivalence}.
\end{proof}

\begin{corollary}[RB is not preserved by \(\Vone\)]\label{cor:RB-not-preserved}
The  probabilistic powerdomain \(\Vone\) does not preserve RB-domains.
\end{corollary}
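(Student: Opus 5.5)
To prove Corollary~\ref{cor:RB-not-preserved}, I will exhibit one RB-domain \(D\) for which \(\Vone(D)\) is not an RB-domain. The natural candidate is the four-point diamond \(\Dia=\{\bot,a,b,\top\}\), with \(\bot<a<\top\), \(\bot<b<\top\), and \(a,b\) incomparable. The argument has two halves: \(\Dia\) is an RB-domain, and \(\Vone(\Dia)\) is not.

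For the first half, note that \(\Dia\) is a finite poset with a least element, so it is a pointed dcpo. Every directed subset of a finite poset contains its own maximum, so every element is compact. The identity map of \(\Dia\) is Scott-continuous, has finite image and satisfies \(\id\le\id\). Hence the singleton family \(\{\id\}\) is a directed family of deflations with supremum \(\id\). By the internal characterization recalled in Section~\ref{sec:valuations}, \(\Dia\) is an RB-domain. More generally, every finite pointed poset is RB for the same reason.

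For the second half, the cover relations of \(\Dia\) are \(\bot\prec a\), \(\bot\prec b\), \(a\prec\top\) and \(b\prec\top\). The undirected Hasse graph is therefore the \(4\)-cycle \(\bot - a - \top - b - \bot\). Since \(\Dia\) has a least element and its Hasse graph contains a cycle, Theorem~\ref{thm:cycle-obstruction} applies directly, and \(\Vone(\Dia)\) is not an RB-domain. Equivalently, one can invoke Theorem~\ref{thm:finite-poset-classification}, since the Hasse graph of \(\Dia\) is not a tree. In the same way, \(\downarrow\top=\Dia\) is not a chain, so \(\Dia\) is not a finite tree domain in the sense of Lemma~\ref{lem:finite-tree-equivalence}.

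Putting the two halves together, \(\Dia\) is an RB-domain whose probabilistic powerdomain is not, so \(\Vone\) does not preserve RB-domains. There is no real obstacle here: all of the difficulty sits in Theorem~\ref{thm:cycle-obstruction}, which has already been established. The only point needing care is that "RB-domain" is used in the internal deflation sense throughout, so finite pointed posets qualify trivially.
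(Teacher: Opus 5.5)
Your proposal is correct and follows the paper's own proof. Both arguments take the diamond \(\Dia\), note that it is RB because its identity is a finite-image deflation, observe that its Hasse graph is the four-cycle, and then apply Theorem~\ref{thm:cycle-obstruction} to conclude that \(\Vone(\Dia)\) is not an RB-domain.
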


\begin{proof}
Let
\[
        \Dia=\{\bot,a,b,\top\},
        \qquad
        \bot<a<\top,
        \quad
        \bot<b<\top,
\]
with \(a\) and \(b\) incomparable.  The poset \(\Dia\) is a finite lattice, hence a finite dcpo; its identity map is a finite-image deflation, so \(\Dia\) is an RB-domain.  Its Hasse graph is the four-cycle
\[
        \bot-a-\top-b-\bot.
\]
By Theorem~\ref{thm:cycle-obstruction}, \(\Vone(\Dia)\) is not an RB-domain.  Therefore \(\Vone\) does not preserve RB-domains.
\end{proof}

\begin{remark}
A detailed worked-out calculation for the diamond example \(\Dia\) will be given.  The calculation shows that the local order cone \(K_\Dia\) is non-simplicial, and that the finite-valued monotone approximants \(Q_n\) would have to satisfy a system of linear inequalities with no solution.  The same calculation applies to any finite poset with a Hasse cycle.
\end{remark}

\begin{remark}[Subprobabilistic and extended powerdomains]
\normalfont
The same method can be applied to the subprobabilistic powerdomains $\Vsub(P)$ and extended probabilistic
powerdomains $\Vext(P)$.  For background on extended probabilistic powerdomains, see
\cite{TixKeimelPlotkin2009}.  We only give the coordinate changes and the
resulting sketch, since the analytic obstruction is exactly the one proved in
Section~\ref{sec:euclidean}.

For the subprobabilistic powerdomain, write a subprobability valuation as its
point-mass vector
\[
        \iota_{\leq1}(\mu)=(\mu(x))_{x\in P}
        \in \R^P_{\geq0},
        \qquad
        \sum_{x\in P}\mu(x)\leq1.
\]
Under this point-coordinate embedding, the stochastic order is
\[
        \mu\leq \nu
        \quad\Longleftrightarrow\quad
        \sum_{x\in U}\mu(x)\leq \sum_{x\in U}\nu(x)
        \quad\text{for every upper set }U\subseteq P.
\]
Thus, at a finite interior point, the local order cone in point coordinates is
\[
        K^{\leq1}_P
        =\{h\in\R^P:h(U)\geq0
          \text{ for every upper set }U\subseteq P\},
        \qquad h(U)=\sum_{x\in U}h_x .
\]

The extended probabilistic powerdomain is treated in the same point
coordinates on its finite part:
\[
        \iota_{\mathrm{ext}}(\nu)=(\nu(x))_{x\in P}
        \in [0,\infty]^P,
\]
and locally at finite strictly positive points this again gives the cone
\[
        K^{\mathrm{ext}}_P
        =\{h\in\R^P:h(U)\geq0
          \text{ for every upper set }U\subseteq P\}.
\]
Hence the subprobabilistic and extended cases have the same finite-dimensional
local cone; the difference from the normalized case is that the tangent space is
\(\R^P\), not the hyperplane \(H_P=\{h:h(P)=0\}\).

For a connected component \(C\) of \(P\), let \(C_*\) be obtained by adjoining a
fresh least element \(*\).  The point-coordinate cone above is identified with
the normalized Hasse-flow cone of \(C_*\) by the linear map
\[
        h\longmapsto (-h(C),h)\in H_{C_*}\subseteq\R^{C_*}.
\]
Indeed, the inequality for an upper set \(U\subseteq C\) becomes the upper-set
inequality for the same upper set in \(C_*\), while the total coordinate sum in
\(C_*\) is zero.  Thus the local cone is simplicial exactly in the rooted-tree
case.  If \(C\) is not a rooted tree, then \(C_*\) has a Hasse cycle, and the
same non-simplicial-cone obstruction rules out the finite-valued monotone local
approximants forced by the RB property.

Conversely, when every connected component is a rooted tree, the positive
finite-tree argument applies componentwise.  The subprobabilistic and extended
powerdomains are then obtained from products of the corresponding tree
components, together with the standard finite-product and retract arguments.
We do not use this in the present paper.  The same proof scheme indicates the following classification, whose full details can be supplied by carrying out the componentwise version of the arguments above:
\[
\begin{aligned}
\Vsub(P)\text{ is an RB-domain}
&\Longleftrightarrow \Vext(P)\text{ is an RB-domain}\\
&\Longleftrightarrow \text{every connected component of }P
   \text{ is a rooted tree}.
\end{aligned}
\]
Here a rooted tree means a connected finite poset with a least element whose
principal ideals are chains.
\end{remark}

\section*{Acknowledgements}

The authors used LLM-based research tools during the preparation of this manuscript, including for exploring the Hasse-cycle obstruction and the finite-dimensional analytic argument.  
The authors are responsible for all mathematical statements and conclusions.

\end{document}